\numberwithin{table}{section}
\numberwithin{equation}{section}
\theoremstyle{plain}
\newtheorem{theorem}{Theorem}[section]
\newtheorem{proposition}[theorem]{Proposition}
\newtheorem{definition}[theorem]{Definition}
\newtheorem{example}[theorem]{Example}
\newtheorem{corollary}[theorem]{Corollary}
\newtheorem{remark}[theorem]{Remark}
\author[1,*]{ \textbf{Bryan S. Hernandez}}
\author[1]{ \textbf{Deza A. Amistas}}
\author[1]{\textbf{Ralph John L. De la Cruz}}
\author[2,3]{ \textbf{Lauro L. Fontanil}}
\author[1]{\textbf{Aurelio A. de los Reyes V}}
\author[3,4,5,6]{\textbf{Eduardo R. Mendoza}}
\affil[1]{\small \textit{Institute of Mathematics, University of the Philippines Diliman, Quezon City 1101, Philippines}}
\affil[2]{\small \textit{Institute of Mathematical Sciences and Physics, University of the Philippines Los Ba{\~n}os, Laguna 4031, Philippines}}
\affil[3]{\small \textit{Mathematics and Statistics Department, De La Salle University, Manila  0922, Philippines}}
\affil[4]{\small \textit{Center for Natural Sciences and Environmental Research, De La Salle University, Manila 0922, Philippines}}
\affil[5]{\small \textit{Max Planck Institute of Biochemistry, Martinsried, Munich, Germany}}
\affil[6]{\small \textit{LMU Faculty of Physics, Geschwister -Scholl- Platz 1, 80539 Munich, Germany}}
\affil[*]{Corresponding author: \texttt{bryan.hernandez@upd.edu.ph}}
\title{\textbf{Independent, Incidence Independent and Weakly Reversible Decompositions of Chemical Reaction Networks}}
\date{}
\begin{document}
\maketitle
\thispagestyle{empty}
\begin{abstract}
Chemical reaction networks (CRNs) are directed graphs with reactant or product complexes as vertices, and reactions as arcs.
A CRN is weakly reversible if each of its connected components is strongly connected. Weakly reversible networks can be considered as the most important class of reaction networks.
Now, the stoichiometric subspace of a network is the linear span
of the reaction vectors (i.e., difference between the product and the reactant complexes).
A decomposition of a CRN is independent (incidence independent) if the direct sum of the stoichiometric subspaces (incidence maps) of the subnetworks equals the stoichiometric subspace (incidence map) of the whole network.
Decompositions can be used to study relationships between steady states of the whole system (induced from partitioning the reaction set of the underlying network) and those of its subsystems. 
In this work, we revisit our novel method of finding independent decomposition, and use it to expand applicability on (vector) components of steady states. We also explore CRNs with embedded deficiency zero independent subnetworks. In addition, we establish a method for finding incidence independent decomposition of a CRN.
We determine all the forms of independent and incidence independent decompositions of a network, and provide the number of such decompositions.
Lastly, for weakly reversible networks, we determine that incidence independence is a sufficient condition for weak reversibility of a decomposition, and we identify subclasses of weakly reversible networks where any independent decomposition is weakly reversible.
\end{abstract}
\baselineskip=0.30in

\section{Introduction}
\label{sec:1}
A chemical reaction network (CRN) is a finite set of reactions. It is a directed graph with reactant or product complexes as vertices, and reactions as arcs.
A reaction vector of a reaction is the difference between the product and the reactant complexes.
In addition, the stoichiometric subspace of a CRN is the linear span, over $\mathbb{R}$, of the reaction vectors.
A decomposition of a CRN is independent (incidence independent) if the direct sum of the stoichiometric subspaces (incidence maps) of the subnetworks equals the stoichiometric subspace (incidence map) of the whole network.

M. Feinberg introduced an important result that provides a relationship between the sets of positive steady states of a kinetic system and its subsystems induced by decomposition of the underlying reaction network \cite{feinberg12,feinberg:book}.
In particular, for an independent decomposition, the intersection of the sets of positive steady states of the subsystems is equal to the set of positive steady states of the whole system. This is one of the reasons why we are interested with independent decompositions.
One may decompose large networks and analyze smaller ones. Recently, the paper of Hernandez and De la Cruz in \cite{hernandez:delacruz1} provided a novel method of finding an independent decomposition of a CRN.
On the other hand, Fari\~nas et al. provided their analogue for incidence independent decompositions, which has something to do with complex balanced steady states \cite{FAML2020}.

Both independent and incidence independent decompositions of chemical reaction networks can be observed in literature. Such ubiquitous occurrences include networks of phosphorylation/dephosphorylation systems \cite{CODH2018,hmr22019}, virus models \cite{virus,hernandez:delacruz1}, carbon cycle models \cite{schmitz,fortun2,hmr22019},  and gene regulatory systems of {\textit{Mycobacterium tuberculosis} \cite{mm,mtb}.
	
	A CRN is weakly reversible if every pair of vertices in every linkage class is strongly connected. Weakly reversible CRNs have been the focus of many CRNT studies \cite{boros2020,boros2019,boros2013,cracium2020,feinberg,HM2021:WRDCKS,mendoza2018,muller2012,talabis2020,talabis2019}. Some of these highlight the  importance of such CRNs in the studies of complex balanced equilibria and positive steady  states. One of the earliest to examine weakly reversible CRNs is M. Feinberg. He established that it is sufficient and necessary for a complex balanced CRN to be weakly reversible \cite{feinberg}. In the same paper, he has shown  that a deficiency zero chemical kinetics system that is not weakly reversible does not have a positive steady state.
	
	The goal of this work is to explore decomposition theory of reaction networks, and study properties of steady states of the corresponding whole system using properties of steady states of its subsystems.
	In particular, we aim to do the following:
	\begin{itemize}
		\item[1.] revisit the method of Hernandez and De la Cruz of finding an independent decomposition of a CRN and provide results for networks with embedded deficiency zero subnetwork,
		\item[2.] establish a method of finding incidence independent decomposition of a CRN,
		\item[3.] determine all the forms of independent and incidence independent decompositions,
		\item[4.] provide the number of independent and incidence independent decompositions of a CRN, and
		\item[5.] give connections between weakly reversible decompositions and the two previous decompositions.
	\end{itemize}
	
	The paper is organized as follows: Section \ref{prelim} gives some fundamental results on chemical reaction networks, chemical kinetic systems, and decomposition theory that are needed in this work. Section \ref{section:results1} provides important results on the independent and incidence independent decompositions, including algorithm, counting, and applications.
	Section \ref{section:results3} deals with conditions about weakly reversible decompositions through independent and incidence independent decompositions. Finally, summary and outlook are given in Section \ref{section:summary}.

	\section{Preliminaries}
	\label{prelim}
	
	This section provides a discussion of fundamental concepts on chemical reaction networks, chemical kinetic systems and essential results on network decomposition theory \cite{arceo2015,feinberg12,feinberg}.
	
	\subsection{Fundamentals of Chemical Reaction Networks}
	
	We formally define a chemical reaction network (CRN) and other notions important to our work.
	
	\begin{definition}
		A {\bf chemical reaction network} $\mathscr{N}$ is given by $\left(\mathscr{S},\mathscr{C},\mathscr{R}\right)$ of nonempty finite sets $\mathscr{S}$, $\mathscr{C} \subseteq \mathbb{R}_{\ge 0}^\mathscr{S}$, and $\mathscr{R} \subset \mathscr{C} \times \mathscr{C}$, of $m$ species, $n$ complexes, and $r$ reactions, respectively, such that the following are satisfied:
		\begin{itemize}
			\item[i.] $\left( {{C_i},{C_i}} \right) \notin \mathscr{R}$ for each $C_i \in \mathscr{C}$, and
			\item[ii.]  for each $C_i \in \mathscr{C}$, there exists $C_j \in \mathscr{C}$ such that $\left( {{C_i},{C_j}} \right) \in \mathscr{R}$ or $\left( {{C_j},{C_i}} \right) \in \mathscr{R}$.
		\end{itemize}
	\end{definition}
	
	We now define the following matrices: molecularity matrix, incidence matrix, and stoichiometric matrix, which we denote by $Y$, $I_a$, and $N$, respectively.
	\begin{itemize}
		\item[1.] The {\bf molecularity matrix} is an $m\times n$ matrix such that $Y_{ij}$ is the stoichiometric coefficient (i.e., scalar that appear in front) of species $X_i$ in complex $C_j$.
		\item[2.] The {\bf incidence matrix} is an $n\times r$ matrix such that 
		$${\left( {{I_a}} \right)_{ij}} = \left\{ \begin{array}{rl}
			- 1&{\rm{ if \ }}{C_i}{\rm{ \ is \ in \ the\ reactant \ complex \ of \ reaction \ }}{R_j},\\
			1&{\rm{  if \ }}{C_i}{\rm{ \ is \ in \ the\ product \ complex \ of \ reaction \ }}{R_j},\\
			0&{\rm{    otherwise}}.
		\end{array} \right.$$
		\item[3.] The {\bf stoichiometric matrix} is the $m\times r$ matrix given by 
		$N=YI_a$.
	\end{itemize}

	We denote the standard basis for $\mathbb{R}^\mathscr{I}$ by $\left\lbrace \omega_i \in \mathbb{R}^\mathscr{I} \mid i \in \mathscr{I} \right\rbrace$.
	For a CRN $\mathscr{N}=(\mathscr{S,C,R})$, the {\bf{incidence map}} $I_a : \mathbb{R}^\mathscr{R} \rightarrow \mathbb{R}^\mathscr{C}$ is the linear map such that for each reaction $r:C_i \rightarrow C_j \in \mathscr{R}$, the basis vector $\omega_r$ is mapped to the vector $\omega_{C_j}-\omega_{C_i} \in \mathscr{C}$.
	The {\bf reaction vectors} of a network are the elements of the set $\left\{{C_j} - {C_i} \in \mathbb{R}^m|\left( {{C_i},{C_j}} \right) \in \mathscr{R}\right\}.$
	The {\bf stoichiometric subspace} of the network is the linear subspace of $\mathbb{R}^m$ given by $$S = {\rm{span}}\left\{ {{C_j} - {C_i} \in \mathbb{R}^m|\left( {{C_i},{C_j}} \right) \in \mathscr{R}} \right\}.$$ The {\bf rank} of the network is given by $s=\dim S$. The set $\left( {x + S} \right) \cap \mathbb{R}_{ \ge 0}^m$ is said to be a {\bf stoichiometric compatibility class} of $x \in \mathbb{R}_{ \ge 0}^m$.
	In addition, two vectors $u, v \in {\mathbb{R}^m}$ are {\bf stoichiometrically compatible} if $u-v \in S$.
	
	CRNs are directed graphs where complexes are vertices and reactions are arcs. Two vertices are {\bf connected} if there is a directed path between them.
	They are {\bf strongly connected} if there is a directed path from one vertex to the other, and vice versa. 
	A subgraph is a {\bf (strongly) connected component}
	if any two vertices of the subgraph are {\bf (strongly) connected}.
	The {\bf (strong) linkage classes} of a CRN are the (strong) connected components of the graph. 
	A {\bf terminal strong linkage classes} is a
	maximal strongly connected subgraph where there are no edges from a complex in the subgraph to a complex outside the subgraph.
	We denote the number of linkage classes by $l$, and the number of strong linkage classes by $sl$.
	A CRN is {\bf weakly reversible} if $sl=l$, i.e., each linkage class is a strong linkage class. In particular, a CRN is {\bf reversible} if each reaction is reversible, i.e., for any reaction $y' \to y$, there exists $y \to y' \in \mathscr{R}$.
	
	\begin{definition}
		The {\bf deficiency} of a CRN is given by $$\delta=n-l-s$$ where $n$ is the number of complexes, $l$ is the number of linkage classes, and $s$ is the dimension of the stoichiometric subspace $S$.
	\end{definition}
	
	\begin{example}
		\label{running:example}
		Consider the CRN of the Baccam model (an influenza virus model) with delayed virus production \cite{baccam,miao}. It has four variables: susceptible target cells denoted by $T$, two types of infected cells, namely, infected cells but not yet producing virus denoted by $I_1$ and infected cells that actively producing virus denoted by $I_2$, and infectious-viral titer denoted by $V$. The following are the reactions \cite{virus}.
		\[ \begin{array}{lll}
			R_1: T+V \to I_1+V & \ \ \ &  R_4: I_2 \to I_2 + V\\
			R_2: I_1 \to I_2  &  \ \ \ & R_5: V \to 0 \\
			R_3: I_2 \to 0 &  \ \ \ & \\
		\end{array}\]
		We compute for the molecularity, incidence, and stoichiometric matrices:
		\begin{align*}
			Y&=
			\begin{blockarray}{cccccccc}
				&T+V & I_1+V & I_1 & I_2 & 0 & I_2+V & V\\
				\begin{block}{c[ccccccc]}
					T & 1 & 0 & 0 & 0 & 0 & 0 & 0\\
					V & 1 & 1 & 0 & 0 & 0 & 1 & 1\\
					I_1 & 0 & 1 & 1 & 0 & 0 & 0 & 0\\
					I_2 & 0 & 0 & 0 & 1 & 0 & 1 & 0\\
				\end{block}
			\end{blockarray},\\
			I_a&=
			\begin{blockarray}{cccccc}
				&R_1 & R_2 & R_3 & R_4 & R_5 \\
				\begin{block}{c[ccccc]}
					T+V & -1 & 0 & 0 & 0 & 0 \\
					I_1+V & 1 & 0 & 0 & 0 & 0 \\
					I_1 & 0 & -1 & 0 & 0 & 0 \\
					I_2 & 0 & 1 & -1 & -1 & 0 \\
					0 & 0 & 0 & 1 & 0 & 1 \\
					I_2+V & 0 & 0 & 0 & 1 & 0 \\
					V & 0 & 0 & 0 & 0 & -1 \\
				\end{block}
			\end{blockarray},
	\end{align*}
	\begin{align*}
			N&=
			\begin{blockarray}{cccccc}
				&R_1 & R_2 & R_3 & R_4 & R_5 \\
				\begin{block}{c[ccccc]}
					T & -1 & 0 & 0 & 0 & 0 \\
					V & 0 & 0 & 0 & 1 & -1 \\
					I_1 & 1 & -1 & 0 & 0 & 0 \\
					I_2 & 0 & 1 & -1 & 0 & 0 \\
				\end{block}
			\end{blockarray}.
		\end{align*}
	\end{example}
	
	The number of complexes is $n=7$, there are two linkage classes, i.e., $l=2$. Also, the rank of the network is $s=4$. Hence, the deficiency of the network is $\delta = n-l-s=7-2-4=1$. The network is not weakly reversible since $sl=5$.
	
	\subsection{Fundamentals of Chemical Kinetic Systems}
	
	\begin{definition}
		A {\bf kinetics} $K$ for a reaction network $\left(\mathscr{S},\mathscr{C},\mathscr{R}\right)$ is an assignment to each reaction $r: y \to y' \in \mathscr{R}$ of a rate function ${K_r}:{\Omega _K} \to {\mathbb{R}_{ \ge 0}}$ such that $\mathbb{R}_{ > 0}^m \subseteq {\Omega _K} \subseteq \mathbb{R}_{ \ge 0}^m$, $c \wedge d \in {\Omega _K}$ if $c,d \in {\Omega _K}$, and ${K_r}\left( c \right) \ge 0$ for each $c \in {\Omega _K}$.
		Furthermore, it satisfies the positivity property: supp $y$ $\subset$ supp $c$ if and only if $K_r(c)>0$ (supp $y$ is the support of vector $y$).
		The system $\left(\mathscr{S},\mathscr{C},\mathscr{R},K\right)$ is called a {\bf chemical kinetic system}.
	\end{definition}
	
	\begin{definition}
		The {\bf species formation rate function} (SFRF) of a chemical kinetic system is given by $f\left( x \right) = NK(x)= \displaystyle \sum\limits_{{C_i} \to {C_j} \in \mathscr{R}} {{K_{{C_i} \to {C_j}}}\left( x \right)\left( {{C_j} - {C_i}} \right)}.$
	\end{definition}
	
	The ordinary differential equation (ODE) or dynamical system of a chemical kinetics system is $\dfrac{{dx}}{{dt}} = f\left( x \right)$. An {\bf equilibrium} or {\bf steady state} is a zero of $f$.
	The {\bf set of positive equilibria} of a CKS $\left(\mathscr{S},\mathscr{C},\mathscr{R},K\right)$ is given by $${E_ + }\left(\mathscr{S},\mathscr{C},\mathscr{R},K\right)= \left\{ {x \in \mathbb{R}^m_{>0}|f\left( x \right) = 0} \right\}.$$
	A CRN is said to admit {\bf multiple (positive) equilibria} if there exist positive rate constants such that the ODE system admits more than one stoichiometrically compatible equilibria.
	Analogously, the {\bf set of complex balanced equilibria} \cite{HornJackson} is given by 
	\[{Z_ + }\left(\mathscr{N},K\right) = \left\{ {x \in \mathbb{R}_{ > 0}^m|{I_a} \cdot K\left( x \right) = 0} \right\} \subseteq {E_ + }\left(\mathscr{N},K\right).\]
	A positive vector $c \in \mathbb{R}^m$ is complex balanced if $K\left( c \right)$ is contained in ${\text{Ker }}{I_a}$, and a chemical kinetic system is {\bf complex balanced} if it has a complex balanced equilibrium.
	
	\subsection{Review of Decomposition Theory}
	\label{sect:decomposition}
	We formally define a decomposition of a CRN: 
	\begin{definition}
		A {\textbf{decomposition}} of $\mathscr{N}$ is a set of subnetworks $\{\mathscr{N}_1, \mathscr{N}_2,...,\mathscr{N}_k\}$ of $\mathscr{N}$ induced by a partition $\{\mathscr{R}_1, \mathscr{R}_2,...,\mathscr{R}_k\}$ of its reaction set $\mathscr{R}$. 
	\end{definition}
	
	We shall call the decomposition of a network with a single subnetwork as the {\bf{trivial decomposition}}.
	
	We denote a network decomposition by 
	$\mathscr{N} = \mathscr{N}_1 \cup \mathscr{N}_2 \cup ... \cup \mathscr{N}_k$
	as used in \cite{ghms2019}. Hence, for the corresponding stoichiometric subspaces, 
	$${S} = {S}_1 + {S}_2 + ... + {S}_k.$$
	A network decomposition $\mathscr{N} = \mathscr{N}_1 \cup \mathscr{N}_2 \cup ... \cup \mathscr{N}_k$  is a {\bf refinement} of
	$\mathscr{N} = {\mathscr{N}'}_1 \cup {\mathscr{N}'}_2 \cup ... \cup {\mathscr{N}'}_{k'}$ 
	(and the latter a {\bf coarsening} of the former) if it is induced by a refinement  
	$\{\mathscr{R}_1, \mathscr{R}_2,...,\mathscr{R}_k\}$
	of $\{{\mathscr{R}'}_1 \cup {\mathscr{R}'}_2 \cup ... \cup {\mathscr{R}'}_{k'}\}$.
	
	We are now ready to define independent and incidence independent decompositions \cite{feinberg12}.
	
	\begin{definition}
		A network decomposition $\mathscr{N} = \mathscr{N}_1 \cup \mathscr{N}_2 \cup ... \cup \mathscr{N}_k$  is said to be {\textbf{independent}} if its stoichiometric subspace is a direct sum of the subnetwork stoichiometric subspaces.
		It is said to be {\textbf{incidence independent}} if the incidence map $I_a$ of $\mathscr{N}$ is the direct sum of the incidence maps of the subnetworks. It is said to be {\textbf{bi-independent}} if it is both independent and incidence independent.
	\end{definition}
	
	We can also show incidence independence by satisfying the equation $$n - l = \sum {\left( {{n_i} - {l_i}} \right)},$$ where $n_i$ is the number of complexes and $l_i$ is the number of linkage classes, in each subnetwork $i$.
	It was given in \cite{fortun2}, that for an independent decomposition, $$\delta \le \delta_1 +\delta_2+ \cdots +\delta_k.$$
	On the other hand, it was shown in \cite{FAML2020} that for an incidence independent decomposition, $$\delta \ge \delta_1 +\delta_2+ \cdots +\delta_k.$$
	Thus, for bi-independent decompositions, the following equation holds:
	$$\delta = \delta_1 +\delta_2+ \cdots +\delta_k.$$
	
	\begin{example}
		\label{running:example3}
		We again refer to Example \ref{running:example}, the Baccam CRN with the following reactions:
	\[ \begin{array}{lll}
		R_1: T+V \to I_1+V & \ \ \ &  R_4: I_2 \to I_2 + V\\
		R_2: I_1 \to I_2  &  \ \ \ & R_5: V \to 0 \\
		R_3: I_2 \to 0 &  \ \ \ & \\
	\end{array}\]
		In \cite{hernandez:delacruz1}, it was shown that $\{\mathscr{N}_1,\mathscr{N}_2\}$, where $\mathscr{N}_1=\{R_1,R_2,R_3\}$ and $\mathscr{N}_2=\{R_4,R_5\}$,
		is an independent decomposition. We can also check that the decomposition is also incidence independent. Thus, the given decomposition of the CRN is bi-independent.
	\end{example}
	
	The following theorems in Decomposition Theory of CRNs provide important relationships between independent and incidence independent decompositions, and the set of positive equilibria of the given kinetic system. Theorem \ref{feinberg:decom:thm} was a result by M. Feinberg, which we call the Feinberg Decomposition Theorem \cite{feinberg12}. 
	This was also discussed in pages 84-88 of his book in 2019 given in \cite{feinberg:book}.
	In an analogous manner, Fari\~nas et al. introduced their results for incidence independent decompositions and complex balanced equilibria \cite{FAML2020}.
	
	\begin{theorem} (\cite{feinberg12,feinberg:book})
		\label{feinberg:decom:thm}
		Let $P(\mathscr{R})=\{\mathscr{R}_1, \mathscr{R}_2,...,\mathscr{R}_k\}$ be a partition of a CRN $\mathscr{N}$ and let $K$ be a kinetics on $\mathscr{N}$. If $\mathscr{N} = \mathscr{N}_1 \cup \mathscr{N}_2 \cup ... \cup \mathscr{N}_k$ is the network decomposition of $P(\mathscr{R})$ and ${E_ + }\left(\mathscr{N}_i,{K}_i\right)= \left\{ {x \in \mathbb{R}^\mathscr{S}_{>0}|N_iK_i(x) = 0} \right\}$ then
		\[{E_ + }\left(\mathscr{N}_1,K_1\right) \cap {E_ + }\left(\mathscr{N}_2,K_2\right) \cap ... \cap {E_ + }\left(\mathscr{N}_k,K_k\right) \subseteq  {E_ + }\left(\mathscr{N},K\right).\]
		If the network decomposition is independent, then equality holds.
	\end{theorem}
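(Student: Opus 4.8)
The plan is to exploit the additivity of the species formation rate function over the reaction partition, and then to use the uniqueness of representation that characterizes a direct sum in order to upgrade the easy inclusion to an equality in the independent case.

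First I would observe that since $\{\mathscr{R}_1,\ldots,\mathscr{R}_k\}$ partitions $\mathscr{R}$, the defining sum for $f$ splits along the blocks of the partition:
$$f(x) = \sum_{C_i \to C_j \in \mathscr{R}} K_{C_i \to C_j}(x)\,(C_j - C_i) = \sum_{i=1}^k \sum_{C_a \to C_b \in \mathscr{R}_i} K_{C_a \to C_b}(x)\,(C_b - C_a) = \sum_{i=1}^k N_i K_i(x).$$
The inclusion $\subseteq$ is then immediate and requires no independence hypothesis: if $x$ lies in every $E_+(\mathscr{N}_i,K_i)$, then $N_i K_i(x)=0$ for each $i$, whence $f(x) = \sum_i N_i K_i(x) = 0$ and $x \in E_+(\mathscr{N},K)$.

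For the reverse inclusion under independence, I would take $x \in E_+(\mathscr{N},K)$ and set $v_i := N_i K_i(x)$. The key structural fact is that each $v_i$ lies in the subnetwork stoichiometric subspace $S_i$, since it is by construction a linear combination of reaction vectors of $\mathscr{N}_i$. The hypothesis $f(x)=0$ then reads $\sum_{i=1}^k v_i = 0$ with $v_i \in S_i$. Independence means precisely that $S = S_1 \oplus \cdots \oplus S_k$, so the representation of $0$ as such a sum is unique, forcing every $v_i = 0$. Hence $N_i K_i(x)=0$ for all $i$, i.e. $x$ belongs to each $E_+(\mathscr{N}_i,K_i)$, which gives the reverse containment and therefore equality.

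The only delicate point is the passage from $\sum_i v_i = 0$ to $v_i = 0$ for each $i$: this is not a formal manipulation but exactly the defining property of a direct sum, and it is precisely where the independence assumption is indispensable. Without independence the subspaces $S_i$ may overlap, permitting a nontrivial cancellation $\sum_i v_i = 0$ with some $v_i \neq 0$, which would place $x$ in $E_+(\mathscr{N},K)$ while excluding it from the intersection. I would therefore be careful to invoke the uniqueness-of-representation characterization of a direct sum, rather than merely the weaker identity $S = S_1 + \cdots + S_k$ that holds for an arbitrary decomposition.
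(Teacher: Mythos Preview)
Your argument is correct and is exactly the standard proof of this result. Note, however, that the paper does not supply its own proof of this theorem: it is stated in the preliminaries as a cited result of Feinberg (with a pointer to the original paper and to pages 84--88 of his 2019 book), so there is no in-paper proof to compare against. Your proof matches the classical one: additivity of the species formation rate function over the reaction partition gives the easy inclusion, and the uniqueness-of-representation property of the direct sum $S = S_1 \oplus \cdots \oplus S_k$ yields the reverse inclusion in the independent case.
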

	
	\begin{theorem} (Theorem 4 \cite{FAML2020})
		\label{decomposition:thm:2}
		Let $\mathscr{N}=(\mathscr{S},\mathscr{C},\mathscr{R})$ be a a CRN and $\mathscr{N}_i=(\mathscr{S}_i,\mathscr{C}_i,\mathscr{R}_i)$ for $i = 1,2,...,k$ be the subnetworks of a decomposition.
		Let $K$ be any kinetics, and $Z_+(\mathscr{N},K)$ and $Z_+(\mathscr{N}_i,K_i)$ be the sets of complex balanced equilibria of $\mathscr{N}$ and $\mathscr{N}_i$, respectively. Then
		\begin{itemize}
			\item[i.] ${Z_ + }\left(\mathscr{N}_1,K_1\right) \cap {Z_ + }\left(\mathscr{N}_2,K_2\right) \cap ... \cap {Z_ + }\left(\mathscr{N}_k,K_k\right) \subseteq  {Z_ + }\left(\mathscr{N},K\right)$.\\
			If the decomposition is incidence independent, then
			\item[ii.] ${Z_ + }\left( {\mathscr{N},K} \right) = {Z_ + }\left(\mathscr{N}_1,K_1\right) \cap {Z_ + }\left(\mathscr{N}_2,K_2\right) \cap ... \cap {Z_ + }\left(\mathscr{N}_k,K_k\right)$, and
			\item[iii.] ${Z_ + }\left( {\mathscr{N},K} \right) \ne \varnothing$ implies ${Z_ + }\left( {\mathscr{N}_i,K_i} \right) \ne \varnothing$ for each $i=1,...,k$.
		\end{itemize}
	\end{theorem}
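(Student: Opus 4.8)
The plan is to derive all three parts from a single linear-algebraic fact about how the incidence map behaves under the reaction partition. First I would record the bookkeeping: the partition $\{\mathscr{R}_1,\dots,\mathscr{R}_k\}$ induces a direct sum $\mathbb{R}^{\mathscr{R}} = \bigoplus_{i=1}^k \mathbb{R}^{\mathscr{R}_i}$, under which the rate vector splits block-wise as $K(x) = (K_1(x),\dots,K_k(x))$, where $K_i$ is the restriction of $K$ to $\mathscr{R}_i$. The key observation is that for each $i$ the restriction of the full incidence map $I_a$ to the block $\mathbb{R}^{\mathscr{R}_i}$ agrees with the subnetwork incidence map $I_{a,i}$ after composing with the inclusion $\iota_i \colon \mathbb{R}^{\mathscr{C}_i} \hookrightarrow \mathbb{R}^{\mathscr{C}}$; indeed both send a basis vector $\omega_r$ with $r \colon y \to y'$ to $\omega_{y'} - \omega_{y}$. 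Consequently $I_a K(x) = \sum_{i=1}^k \iota_i\big(I_{a,i} K_i(x)\big)$ for every $x$.

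For part (i), suppose $x$ lies in $\bigcap_i Z_+(\mathscr{N}_i, K_i)$, so $x \in \mathbb{R}^{\mathscr{S}}_{>0}$ and $I_{a,i} K_i(x) = 0$ for each $i$. The preceding identity then gives $I_a K(x) = 0$, i.e. $x \in Z_+(\mathscr{N}, K)$. Note that this inclusion needs no independence hypothesis.

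For the nontrivial inclusion in part (ii), I would first reinterpret incidence independence as the directness of the image sum $\operatorname{Im} I_a = \bigoplus_{i=1}^k \operatorname{Im}(\iota_i \circ I_{a,i})$ inside $\mathbb{R}^{\mathscr{C}}$. This equivalence follows because the image of $I_a$ is always the sum $\sum_i \operatorname{Im}(\iota_i \circ I_{a,i})$, while $\operatorname{rank} I_a = n - l$ and $\operatorname{rank} I_{a,i} = n_i - l_i$; hence the defining relation $n - l = \sum_i (n_i - l_i)$ recorded above is exactly the statement that this sum of subspaces is direct. Now take $x \in Z_+(\mathscr{N}, K)$. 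The identity from the first paragraph yields $0 = I_a K(x) = \sum_{i=1}^k \iota_i\big(I_{a,i} K_i(x)\big)$, a relation among vectors each belonging to its own summand $\operatorname{Im}(\iota_i \circ I_{a,i})$. Directness forces every term to vanish, and since each $\iota_i$ is injective we conclude $I_{a,i} K_i(x) = 0$, i.e. $x \in Z_+(\mathscr{N}_i, K_i)$ for all $i$. Together with (i) this gives the claimed equality.

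Part (iii) is then immediate: any $x \in Z_+(\mathscr{N}, K)$ witnesses, via (ii), that each $Z_+(\mathscr{N}_i, K_i)$ is nonempty. The main obstacle in this argument is not the equilibrium manipulation but the structural lemma underlying (ii): making precise the claim that incidence independence is equivalent to the directness of the image sum, which rests on the identification $\operatorname{rank} I_a = n - l$ and on the compatibility of the block restriction of $I_a$ with the subnetwork incidence maps through the inclusions $\iota_i$. Once that reformulation is in hand, parts (i)--(iii) reduce to the observation that a vanishing sum of vectors drawn from an internal direct sum must be termwise zero.
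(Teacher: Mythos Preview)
Your argument is correct. The block decomposition $I_a K(x)=\sum_i \iota_i\bigl(I_{a,i}K_i(x)\bigr)$ immediately yields (i), and your reformulation of incidence independence as directness of the sum $\sum_i \operatorname{Im}(\iota_i\circ I_{a,i})$ inside $\mathbb{R}^{\mathscr{C}}$---via the rank identities $\operatorname{rank} I_a=n-l$ and $\operatorname{rank} I_{a,i}=n_i-l_i$ together with the paper's stated criterion $n-l=\sum_i(n_i-l_i)$---is exactly what is needed to force each summand to vanish in (ii). Part (iii) then follows at once.

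There is nothing to compare against in this paper, however: the theorem is quoted here as a background result from \cite{FAML2020} (it appears in the preliminaries, Section~\ref{sect:decomposition}) and the present paper gives no proof of it. Your argument is the natural analogue, with $I_a$ in place of $N$, of Feinberg's proof of Theorem~\ref{feinberg:decom:thm}, and is presumably close in spirit to what the cited reference does.
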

	
	\section{Independent and incidence independent decompositions of chemical reaction networks}
	\label{section:results1}
	
	\subsection{Existence of independent and incidence independent decompositions}
	\label{subsection:independent:dec}
	
	We now focus our attention on a review of the concept of coordinate graph and the method of finding independent decompositions of CRNs, if they exist. The main reference for this section is the work of Hernandez and De la Cruz in \cite{hernandez:delacruz1}.
	
	We let ${ R}=\{{ R}_1,\ldots, { R}_m\}$ be a set of vectors such that ${\dim}(\text{span } { R})=\rho$ and $\{{ R}_1,\ldots,{ R}_\rho\}$ is linearly independent.
	The {\bf{coordinate graph}} of ${ R}$ is the undirected graph $G=(V,E)$ with vertex set $V=\{v_1,\ldots, v_\rho\}$ and edge set $E$ such that
	$(v_i,v_j) \in E$ if and only if there exists $k >\rho$ with ${ R}_k=\displaystyle \sum_{j=1}^\rho a_j { R}_j$ and both $a_i$ and $a_j$ are nonzero. 
	
	We introduce the following theorem which gives a necessary and sufficient condition for the existence of an independent decomposition of a set of vectors, and a method of finding a nontrivial independent decomposition, if it exists.
	\begin{theorem} \cite{hernandez:delacruz1} Let  ${R}$ be a finite set of vectors. An independent decomposition of ${R}$ exists if and only if 
		the coordinate graph of ${R}$ is not connected. 
		\label{nec:suf:inde}
	\end{theorem}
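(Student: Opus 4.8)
The plan is to prove both implications by matching the connected components of the coordinate graph $G$ to the blocks of an independent decomposition, using the distinguished linearly independent subset $\{R_1,\dots,R_\rho\}$ as the bridge between the two. Throughout I would assume the vectors in $R$ are nonzero, as reaction vectors always are; this rules out the degenerate trick of splitting a zero vector off into its own block, which would otherwise force an independent decomposition no matter what $G$ looks like. The central observation I will exploit is that every dependent vector $R_k$ (with $k>\rho$) is pinned to a single connected component of $G$: if $R_k=\sum_{j'=1}^\rho a_{j'} R_{j'}$ has two nonzero coefficients $a_i,a_j$, then by definition $(v_i,v_j)\in E$, so all basis vectors occurring in $R_k$ with nonzero coefficient are pairwise adjacent and hence lie in one component.

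For the direction ``$G$ disconnected $\Rightarrow$ a nontrivial independent decomposition exists,'' I would fix the components $C_1,\dots,C_p$ of $G$ (with $p\ge 2$) and build blocks $R^{(\ell)}$ by placing each basis vector $R_i$ into the block of the component of $v_i$, and each dependent $R_k$ into the block of the unique component carrying its support. By the observation above this is a well-defined partition of $R$. Since every $R_k$ in block $\ell$ is a linear combination of $\{R_i : v_i\in C_\ell\}$, one gets $\text{span}(R^{(\ell)})=\text{span}\{R_i : v_i\in C_\ell\}$; these subspaces are spanned by disjoint sub-collections of the global basis, so their sum is direct and equals $\text{span}(R)$, giving a nontrivial independent decomposition.

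For the converse, suppose $\{R^{(1)},\dots,R^{(t)}\}$ is a nontrivial independent decomposition; write $S_\ell=\text{span}(R^{(\ell)})$ and $s_\ell=\dim S_\ell$, so $\rho=\sum_\ell s_\ell$. I would partition the vertex set by $V_\ell=\{v_i : R_i\in R^{(\ell)}\}$. The first step is a dimension count ensuring each $V_\ell$ is nonempty: the basis vectors lying in block $\ell$ form a linearly independent subset of $S_\ell$, so their number $b_\ell$ satisfies $b_\ell\le s_\ell$, while $\sum_\ell b_\ell=\rho=\sum_\ell s_\ell$ forces $b_\ell=s_\ell\ge 1$; hence $V=\bigsqcup_\ell V_\ell$ is a partition into $t\ge 2$ nonempty pieces. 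The second step shows no edge crosses this partition. Given an edge $(v_i,v_j)$ arising from $R_k=\sum_{j'} a_{j'}R_{j'}$ with $a_i,a_j\ne 0$, I regroup $R_k=\sum_m u_m$ where $u_m=\sum_{v_{j'}\in V_m}a_{j'}R_{j'}\in S_m$; since $R_k$ lies in exactly one block, say $\ell''$, directness of $\bigoplus_m S_m$ forces $u_m=0$ for $m\ne\ell''$. Because the basis vectors in each block are linearly independent, $a_i\ne 0$ makes the $u_m$ for the block of $v_i$ nonzero, so that block is $\ell''$; likewise for $v_j$. Thus $v_i,v_j\in V_{\ell''}$, every edge stays inside one block, and $G$ is disconnected.

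I expect the backward direction to be the main obstacle, and within it the two ingredients carry the weight in complementary ways: the dimension count $b_\ell=s_\ell$ is what guarantees the induced vertex partition is genuinely into $\ge 2$ nonempty blocks rather than collapsing to one, while uniqueness of the direct-sum representation is what upgrades ``$R_k$ sits in one block'' to ``both endpoints of the edge sit in that block.'' The forward direction is comparatively routine once the support-localization observation is in place. The one subtlety worth flagging is the nonzero-vector hypothesis noted at the start, without which a zero vector could be peeled off to manufacture a decomposition even when $G$ is connected.
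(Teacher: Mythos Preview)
Your argument is correct, but there is nothing to compare it to here: the paper does not supply its own proof of this theorem. It quotes the result from \cite{hernandez:delacruz1} and then only restates the constructive half as the 5-step method (and later, Algorithm~\ref{algorithm:independent}). Your forward direction is exactly that construction, phrased more carefully: the observation that the support of each dependent $R_k$ spans a clique and hence sits in a single component is precisely what makes Step~5 of the method well defined.

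Your backward direction goes beyond anything recorded in this paper. The two ingredients you isolate---the dimension count $b_\ell=s_\ell$ forcing every $V_\ell$ to be nonempty, and the direct-sum uniqueness argument forcing both endpoints of any edge into the same block---are the right ones, and the proof is clean. The caveat you flag about zero vectors is apt and is silently assumed throughout the paper (reaction vectors are always nonzero by the CRN axioms). One cosmetic point: you might state explicitly that ``independent decomposition'' in the theorem means \emph{nontrivial} independent decomposition, since the trivial one always exists; the paper is tacit about this too, but it is worth making plain.
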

	
	The following steps are useful in finding an independent decomposition of a CRN, if it exists \cite{hernandez:delacruz1}.
	
	\begin{itemize}
		\item[1.] Get $N^T$, the transpose of the stoichiometric matrix $N$.
		\item[2.] Find a maximal linearly independent set of vectors say $\{R_{i_1},R_{i_2}\ldots, R_{i_\rho}\}$ that forms a basis for the row space of $N^T$.
		\item[3.] Construct the vertex set of the coordinate graph $G=(V,E)$ of $R$ by representing each $R_{i_j}$ as vertex $v_i$.
		\item[4.] For each vector $R_k$ distinct from 
		the elements of $\{R_{i_1},R_{i_2}\ldots, R_{i_\rho}\}$,
		write $R_k= \sum_j a_{k,j} R_{i_j}$.
		For each pair $a_{k,j_1}$ and $a_{k,j_2}$
		in the preceding sum,
		with both coefficients being nonzero, we add
		the edge $(v_{j_1},v_{j_2})$ to $E$.
		\item[5.] There is no nontrivial
		decomposition for $R$ if the formed coordinate graph $G$
		is connected. Otherwise, the
		reaction vectors corresponding to vertices
		belonging in the same connected component, together with the reaction vectors in their span
		constitute a partition of $R$ in the
		independent decomposition of
		$R$.
	\end{itemize}
	
	The next example gives us two things: (1) an illustration of getting a nontrivial independent decomposition, and (2) that if we have one equilibrium satisfying both subnetworks, it follows that this equilibrium is also an equilibrium of the given whole network.
	
	\begin{example}
		\label{mass:action:example}
		We consider the following CRN that satisfies mass action property:
		\begin{align*}
			{R_1}&:{2A} + {B} \to {A} + 2{B}\\
			{R_2}&:{2B} + {C} \to A+B+C\\
			{R_3}&:{2A} \to {A+C}\\
			{R_4}&:{B+C} \to {B+A}
		\end{align*}
		The corresponding set of ordinary differential equations is given by:
		\begin{align*}
			{\dfrac{{dA}}{{dt}}} &=  - {k_1}{A^2}B + {k_2}{B^2}C - {k_3}{A^2} + {k_4}BC\\
			{\dfrac{{dB}}{{dt}}} &=  {k_1}{A^2}B - {k_2}{B^2}C\\
			{\dfrac{{dC}}{{dt}}} &= {k_3}{A^2} - {k_4}BC
		\end{align*}
		We now get an nontrivial independent decomposition, if there is any. First, we obtain the transpose of the stoichiometric matrix.
		\[ N^T=\left[ {\begin{array}{*{20}{c}}
				{ - 1}&1&0\\
				1&{ - 1}&0\\
				{ - 1}&0&1\\
				1&0&{ - 1}
		\end{array}} \right]\]
		
		Let $R_i$ be the $i$th row of the matrix. A basis for the row space of the matrix is $\left\{ {{R_1},{R_3}} \right\}$. Then, we have
		\begin{itemize}
			\item[1.] $R_2=-R_1$, and
			\item[2.] $R_4=-R_3$.
		\end{itemize}
		
		With respect to getting the coordinate graph, let $R_1=v_1$ and $R_3=v_2$. Since we are forced to disconnect vertices $v_1$ and $v_2$, we obtain the following partition of the reaction set that gives a nontrivial independent decomposition of $R$:
		\begin{enumerate}
			\item $P_1=\{R_1,R_2\}$, and
			\item $P_2=\{R_3,R_4\}$.
		\end{enumerate}
		We associate independent subnetworks $\mathscr{N}_1$ and $\mathscr{N}_2$ of $\mathscr{N}$ to $P_1=\{R_1,R_2\}$ and $P_2=\{R_3,R_4\}$, respectively.
		
		Now, the corresponding set of ordinary differential equations for $\mathscr{N}_1$ is given by:
		\begin{align*}
			{\dfrac{{dA}}{{dt}}} &=  - {k_1}{A^2}B + {k_2}{B^2}C\\
			{\dfrac{{dB}}{{dt}}} &=  {k_1}{A^2}B - {k_2}{B^2}C
		\end{align*}
		such that the rate of concentration of species $C$ is not changing.
		On the other hand, the corresponding set of ordinary differential equations for $\mathscr{N}_2$ is given by:
		\begin{align*}
			{\dfrac{{dA}}{{dt}}} &=  -{k_3}{A^2} + {k_4}{B}C\\
			{\dfrac{{dC}}{{dt}}} &= {k_3}{A^2} - {k_4}BC
		\end{align*}
		such that the rate of concentration of species $B$ is not changing.
		Note that for particular rate constants 
		\begin{itemize}
			\item[1.] $k_1=1$ and $k_2=2$, $\left(A,B,C\right)=(2,1,2)$ is an equilibrium for $\mathscr{N}_1$, and
			\item[2.] $k_3=1$ and $k_4=2$, $\left(A,B,C\right)=(2,1,2)$ is an equilibrium for $\mathscr{N}_2$.
		\end{itemize}
		It follows that for particular rate constants $k_1=k_3=1$ and $k_2=k_4=2$, $\left(A,B,C\right)=(2,1,2)$ is an equilibrium for the whole network.
	\end{example}
	
	Let $R$ be the set of reaction vectors of a CRN. We recall that a decomposition of $R$ is a collection $\mathscr{P}=\{ P_1,\ldots, P_m\}$ of subsets of $R$ such that 
	\begin{center} $\bigcup P_i = R$ and $P_i \cap P_j = \varnothing $ for all $i \neq j$. 
	\end{center}
	Moreover, let $p_0$ be the dimension of the span of $R$ and $p_1=\displaystyle \sum_{i=1}^m \mbox{dim}(\mbox{span} P_i) $.
	Since $${\rm{span}} (U_1 \cup \ldots \cup U_k) = {\rm{span}} U_1 + \cdots + {\rm{span}} U_k,$$ and $\displaystyle \sum_{i=1}^k \dim V_i \leq \dim \left( \displaystyle \sum_{i=1}^k V_i \right)$,
	we have that $p_1$ is always less than or equal to $p_0$.
	If we have equality, then we say that the decomposition $\mathscr{P}$ is independent.
	
	Let $\mathscr{P}=\{ P_1,\ldots, P_m\}$ and $\mathscr{D}$ be a decomposition of $R$. We say that $\mathscr{D}$ is a simple refinement of $R$ if, upon reindexing if necessary,
	$$\mathscr{D}=\{ P_{1,1},\ldots, P_{1,k}, P_2,\ldots, P_m\},$$ where $P_1=P_{1,1}  \cup \cdots \cup P_{1,k}$. In the preceding, $\mathscr{P}$ is a simple coarsening of $\mathscr{D}$.
	Observe that if $\mathscr{D}$ is independent, then we have
	$$\begin{array}{rcl} \dim {\rm{span}} R &= &\left( \displaystyle \sum_{i=1}^k \dim {\rm{span}} P_{1,k} \right)+ \dim {\rm{span}} P_2+ \cdots + \dim {\rm{span}} P_m \\
		& \leq & \dim {\rm{span}}\left( \displaystyle \bigcup_{i=1}^k P_{1,i} \right) + \dim {\rm{span}} P_2 + \cdots + \dim {\rm{span}} P_m \\
		& \leq &  \dim {\rm{span}} P_1 + \dim {\rm{span}} P_2 + \cdots + \dim {\rm{span}} P_m \\ 
		& \leq & \dim {\rm{span}} R, \end{array}$$
	which tells us that the above inequalities are equalities, and so $\mathscr{P}$ is independent as well. Suppose that $\mathscr{P}$ is independent. Then $\mathscr{D}$
	is also independent if and only if
	$$\dim {\rm{span}} P_1 = \dim {\rm{span}} P_{1,1} + \cdots + \dim {\rm{span}} P_{1,k},$$
	that is, when ${\rm{span}} P_1 = {\rm{span}} P_{1,1} \oplus \cdots \oplus {\rm{span}} P_{1,k}$. Recall that a sum $V_1 + \cdots + V_k$ is direct \cite{Roman} if and only if 
	$V_i \cap \displaystyle \sum_{j \neq i} V_j =\{0\}$ for all $i=1,\ldots, k$, and so $\mathscr{D}$ is independent if and only if
	${\rm{span}} P_{1,i} \cap \displaystyle \sum_{j \neq i} {\rm{span}} P_{1,j} = \{0\}$.

	We have $\mathscr{D}$ is a refinement
	of $\mathscr{P}$ if there is a sequence of decompositions of $R$, say $\mathscr{P}_0:=\mathscr{P},\mathscr{P}_1,\ldots, \mathscr{P}_r:=\mathscr{D}$ such that  $\mathscr{P}_{i}$ is a simple
	refinement of $\mathscr{P}_{i-1}$ for $i=1,\ldots,r$. The above discussion implies that if $\mathscr{D}$ is independent, then $\mathscr{P}_i$  is independent for all $i$. To put it simply,
	if $\mathscr{P}$ has an independent refinement, then it has an independent simple refinement. We now prove the following theorem.
	
	\begin{theorem} Let  ${R}$ be a finite set of vectors. $R$ has exactly one independent (incidence independent) decomposition with no independent (incidence independent) refinement. We shall call this decomposition as the finest independent (incidence independent) decomposition of $R$.
		Moreover, the independent (incidence independent) decompositions of $R$ are precisely the finest decomposition of $R$ and its coarsenings.
		\label{lemma:possible:independent}
	\end{theorem}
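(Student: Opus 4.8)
The plan is to produce the finest decomposition explicitly from the coordinate graph of Theorem \ref{nec:suf:inde}, and then argue that its connected components cannot be broken apart by any independent decomposition. I would prove the statement for the independent case; the incidence independent case then follows by applying the same argument to the incidence vectors, as explained at the end.

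First I would fix a basis $\{R_{i_1},\dots,R_{i_\rho}\}$ of $\mathrm{span}\,R$ taken from $R$, build the coordinate graph $G$, and let $C_1,\dots,C_t$ be its connected components with $V_s=\mathrm{span}\,C_s$. Since the $C_s$ partition the basis, one has $\mathrm{span}\,R=V_1\oplus\cdots\oplus V_t$. Expanding any $R_k$ in the basis, its set of nonzero coordinates forms a clique in $G$ and hence lies in a single component, so $R_k\in V_s$ for a unique $s$; grouping the reaction vectors accordingly yields a partition $\mathscr{F}=\{P_1,\dots,P_t\}$ with $\mathrm{span}\,P_s=V_s$, so that $\mathscr{F}$ is independent. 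The coordinate graph of the block $P_s$ is exactly the induced subgraph $G[C_s]$, which is connected, so by Theorem \ref{nec:suf:inde} each $P_s$ has no nontrivial independent decomposition.

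The core of the proof is to show that $\mathscr{F}$ refines every independent decomposition $\mathscr{G}=\{G_1,\dots,G_q\}$. Here I would exploit the uniqueness of the direct-sum expansion $\mathrm{span}\,R=\bigoplus_r \mathrm{span}\,G_r$: if $R_k\in G_r$, then applying the projection onto $\mathrm{span}\,G_{r'}$ with $r'\ne r$ to the identity $R_k=\sum_j a_j R_{i_j}$ and using the linear independence of the basis forces $a_j=0$ whenever $R_{i_j}\notin G_r$. Thus every basis vector appearing in $R_k$ shares the block of $R_k$; consequently each edge of $G$ joins vertices lying in a common block of $\mathscr{G}$, and by connectedness each component $C_s$---and therefore each $P_s$---is contained in one block of $\mathscr{G}$. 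This forcing step is the part I expect to be most delicate, since it is precisely what excludes accidental finer independent splittings.

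With these two facts the theorem assembles quickly from the material preceding it. Since $\mathscr{F}$ is independent and each $P_s$ admits no nontrivial independent decomposition, the reduction to simple refinements established above shows $\mathscr{F}$ has no nontrivial independent refinement. If $\mathscr{G}$ is any independent decomposition with no independent refinement, the property just established that $\mathscr{F}$ refines every independent decomposition gives that $\mathscr{F}$ refines $\mathscr{G}$; were $\mathscr{F}\ne\mathscr{G}$, then $\mathscr{F}$ would be a nontrivial independent refinement of $\mathscr{G}$, a contradiction, so $\mathscr{F}=\mathscr{G}$, proving uniqueness. Finally, every coarsening of $\mathscr{F}$ is independent by the upward inheritance of independence noted earlier, while the same refinement property shows conversely that every independent decomposition is a coarsening of $\mathscr{F}$; together these identify the independent decompositions of $R$ with $\mathscr{F}$ and its coarsenings. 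For the incidence independent statement, I would note that a network decomposition is incidence independent exactly when the induced decomposition of the incidence vectors $\{\omega_{C_j}-\omega_{C_i}\}\subseteq\mathbb{R}^{\mathscr{C}}$ is independent (as $\dim\mathrm{Im}\,I_a=n-l$); applying the entire argument to this vector set gives the incidence independent case verbatim.
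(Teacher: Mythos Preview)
Your proof is correct and takes a genuinely different route from the paper's. The paper argues purely set-theoretically: it takes two hypothetical independent decompositions $\mathscr{P}_1=\{N_i\}$ and $\mathscr{P}_2=\{M_j\}$ with no independent refinement, intersects $N_1$ with the blocks of $\mathscr{P}_2$, and shows via the direct-sum condition that any nontrivial intersection pattern would produce a forbidden refinement of one of them, forcing $\mathscr{P}_1=\mathscr{P}_2$; the ``moreover'' clause is then obtained from finiteness. The coordinate graph does not appear at all---its identification with the finest decomposition is deferred to the separate Remark~\ref{remark:finest:independent}. You instead \emph{construct} the candidate $\mathscr{F}$ from the connected components of the coordinate graph and prove the stronger intermediate fact that $\mathscr{F}$ refines \emph{every} independent decomposition, using the direct-sum projections to force basis vectors to stay in the block of any $R_k$ they support. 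This buys you a cleaner and more informative proof: uniqueness, the characterisation of all independent decompositions as coarsenings of $\mathscr{F}$, and Remark~\ref{remark:finest:independent} all fall out simultaneously rather than in three separate pieces. The paper's argument, by contrast, is basis-free and would transfer verbatim to any lattice of ``decompositions'' satisfying the upward-inheritance and simple-refinement properties established just before the theorem, without needing Theorem~\ref{nec:suf:inde} as input.
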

	\begin{proof} We prove this theorem for the independent case. We can use an analogous proof for the incidence independence. Observe that if $R$ has only the trivial decomposition, then we are done. Suppose that $R$ has a nontrivial independent decomposition. We first prove the uniqueness of
		the finest independent decomposition of $R$. Let
		$\mathscr{P}_1=N_1 \cup \cdots \cup N_k$ and $\mathscr{P}_2=M_1 \cup \cdots \cup M_l$ be independent decompositions of $R$ having no further independent decomposition. 
		Consider $N_1$. Collect all sets in $\mathscr{P}_2$ that has nontrivial intersection with $N_1$. Reindexing when necessary, we can assume these sets are $M_1,\ldots, M_r$. We show that $r=1$, that
		is, $N_1=M_1$. One can then prove inductively that $k=l$ and that $N_i=M_i$ for all $i=1,\ldots, k$ as desired. Now, define $M_{1,1}= N_1 \cap M_1$ and $M_{1,2} = M_1 \setminus N_1$.
		If $M_{1,2} \neq \varnothing$, one checks that $M_{1,1} \subset {\rm{span}} N_1$ and $M_{1,2} \subset \displaystyle \sum_{i=2}^k {\rm{span}} N_i$, which implies that ${\rm{span}} M_{1,1} \leq {\rm{span}} N_1$ and
		${\rm{span}} M_{1,2} \leq  \displaystyle \sum_{i=2}^r {\rm{span}} N_i$. Since $ \displaystyle \sum_{i=1}^k {\rm{span}} N_i$ is direct, $${\rm{span}} M_{1,1} \cap {\rm{span}} M_{1,2} \subseteq  {\rm{span}} N_1 \cap  \displaystyle \sum_{i=2}^k {\rm{span}} N_i = \{0\}.$$
		Thus, the sum ${\rm{span}} M_{1,1} + {\rm{span}} M_{1,2} = {\rm{span}} M_1$ is direct. One checks that $M_{1,1} \cup M_{1,2} \cup M_2 \cup \cdots \cup M_l$ is an independent refinement
		of $\mathscr{P}_2$, which is a contradiction. Hence, $M_{1,2}= \varnothing$, that is, $M_{1} \subseteq N_1$. One can prove similarly that $M_i \subseteq N_1$ for $i=1,\ldots r$. 
		If $r>1$, we have that $N_1 = M_1 \cup \cdots \cup M_r$, since $\mathscr{P}_2$ is independent, $\dim {\rm{span}} N_1 = \dim {\rm{span}} M_1 + \cdots + \dim {\rm{span}} M_r$. 
		We now have that $M_1,\ldots, M_r, N_2,\ldots, N_k$ is an independent refinement of $\mathscr{P}_1$, which is a contradiction. Thus $k=1$ and that $M_1=N_1$, and this proves the uniqueness
		of the finest independent decomposition of $R$. The remaining part of the claim follows from this uniqueness and the fact that $R$ only has a finite number of elements
		and so an independent decomposition can be refined a finite number of times to obtain the finest independent decomposition of $R$.
	\end{proof}
	
	Suppose that $\mathscr{P}=N_1 \cup \cdots \cup N_k$ is the independent decomposition obtained from the 5-step method of finding independent decomposition. If $\mathscr{P}$ has an independent refinement,
	then $\mathscr{P}$ has an independent simple refinement, and we can assume that such a refinement is $(N_{1,1} \cup \cdots \cup N_{1,r}) \cup \cdots \cup N_k$, where $N_1$ is the disjoint union
	of $N_{1,1} \cup \cdots \cup N_{1,r}$. Suppose that $s$ is the dimension of ${\rm{span}} N_1$. As constructed in Algorithm \ref{algorithm:independent},  
	we can assume $N_1= \left\{ R_1,\ldots, R_s, \displaystyle \sum_{i=1}^s a_{1,i} R_i, \ldots \right\}$, where $R_1,\ldots, R_s$ is linearly independent, and $a_{i,j} \neq 0$ for all $i$ and $j$.
	We can reindex so that $ \displaystyle \sum_{i=1}^s a_{1,i}R_i \in N_{1,1}$. Assume that $t$ elements of $R_1,\ldots, R_s$ are in $N_{1,1}$ ($t$ may be zero), and after reindexing, we assume that
	these are $R_1,\ldots, R_t$. One checks that $$\left(\sum_{i=1}^s a_{1,i}R_i \right) - a_{1,1}R_1 - \cdots - a_{1,t} R_t = \sum_{i=t+1}^s a_{1,i}R_i \in  {\rm{span}} N_{1,1} \cap \sum_{i=2}^r N_{1,i},$$
	and that $\displaystyle \sum_{i=t+1}^s a_{1,i}R_i \neq 0$, which contradicts the fact that $(N_{1,1} \cup \cdots \cup N_{1,r}) \cup \cdots \cup N_k$ is independent. Thus, $R_1,\ldots, R_s \in N_{1,1}$. But we have
	$$
	\begin{array}{rcl}
		s&=&\dim {\rm{span}} N_1 \\ &=& \dim {\rm{span}} N_{1,1} + \cdots + \dim {\rm{span}} N_{1,r}  \\ &=& s +\dim {\rm{span}} N_{1,2} + \cdots + \dim {\rm{span}} N_{1,r} , \end{array}$$ and so
	there is a contradiction since each $N_{1,i}$ for $i>1$ contains a (nonzero) vector in $N_1$. Thus $\mathscr{P}$ has no independent refinement. This gives us the following remark.
	
	\begin{remark} The independent decomposition obtained from the method of finding an independent decomposition is precisely the finest independent decomposition of $R$.
		\label{remark:finest:independent} 
	\end{remark}
	
	Based on Theorem \ref{lemma:possible:independent} and Remark \ref{remark:finest:independent}, we modify the method of finding independent decomposition in the form of an algorithm given in Algorithm \ref{algorithm:independent}. It explicitly mentions that such independent decomposition is the finest.
	
	\begin{algorithm}
		\SetAlgoLined
		\bigskip
		{\bf{STEP 1}} {{\bf{Input}}: reaction network $\mathscr{N}$}\\
		\bigskip
		
		{\bf{STEP 2}} {Identify the transpose of the stoichiometric matrix, and correspondingly find a maximal linearly independent set of vectors as rows of the matrix}\\
		${N}^T=$ transpose of the stoichiometric matrix\\
		$B=$ maximal linearly independent set of vectors in the stoichiometric matrix $N^T$\\
		\bigskip
		
		{\bf{STEP 3}} {Identify vertices and edges of the coordinate graph $G = (V,E)$}\\
		\For{$q = 1$ to $|B|$}{
			$v_q:=R_q$\\
		}
		$V=\{v_q|q=1,\ldots,|B|\}$\\
		\For{$R_k \notin B$}{
			$R_k= \displaystyle \sum_j a_{k,j} R_{i_j}$ for elements $R_{i_j} \in B$\\
			\eIf{$a_{k,j_1} \ne 0$ and $a_{k,j_2}\ne 0$}{
				$(v_{j_1},v_{j_2}) \in E$
			}{
			}
		}
		\bigskip
		{\bf{STEP 4}} {{\bf{Output}:} The finest incidence independent decomposition with the information whether trivial or nontrivial}\\
		\eIf{$G$ is not connected}{
			there is no nontrivial incidence independent decomposition for $R$, i.e., $R$ itself induces the finest incidence independent decomposition with only one subnetwork
		}{
			reaction vectors $R_k$ corresponding to vertices
			belonging in the same connected component, together with the reaction vectors in their span
			constitute a partition of $R$ in the
			independent decomposition of
			$R$, i.e.,
			the collection of partitions of $R$ induced by $G = (V,E)$ is the finest incidence independent decomposition
		}
		\bigskip
		
		\caption{Method for finding the finest independent decomposition of a reaction network}
		\label{algorithm:independent}
	\end{algorithm}

	\begin{example}
		Consider the mathematical model for dynamics of coronavirus based on the target cell-limited model \cite{ciupe:heffernan,hernandez:vargas2,perelson} from Hernandez-Vargas and Velasco-Hernandez \cite{hernandez:vargas}.
		It has 3 variables which are time-dependent. Host cells can either be susceptible $(S)$ or infected $(I)$. Viral particles $(V)$ can infect susceptible cells of rate $\beta$, and cells release virus once they are productively infected of rate $p$, and $c$ is the clearing rate of the virus particles. Moreover, the infected cells die at the rate of $\delta$ \cite{hernandez:vargas}. 
		The reaction network $\mathscr{N}$ is given as follows:
		\begin{align*}
			{R_1}&:{U} + {V} \to {I} + {V}\\
			{R_2}&:{I} \to 0\\
			{R_3}&:I \to I+V\\
			{R_4}&:V \to 0
		\end{align*}
		We now use the method of finding an independent decomposition. First, we obtain the transpose of the stoichiometric matrix:
		\[{N}^T=\left[ {\begin{array}{*{20}{c}}
				{ - 1}&1&0\\
				0&{ - 1}&0\\
				{ 0}&0&1\\
				0&0&{ - 1}
		\end{array}} \right]\]
		
		Let $R_i$ be the $i$th row of the matrix. A basis for the row space of the matrix is $\left\{ {{R_1},{R_2},{R_3}} \right\}$. Then, we have $R_4=-R_3$.
		
		The coordinate graph, with $v_1=R_1$, $v_2=R_2$ and $v_3=R_3$ has three components yielding the following partition of the reaction set that gives a nontrivial independent decomposition of $R$:
		\begin{enumerate}
			\item $P_1= \{R_1\}$,
			\item $P_2= \{R_2\}$, and
			\item $P_3= \{R_3,R_4\}$.
		\end{enumerate}
		Note that the three components of the coordinate graph give rise to three subnetworks in the (finest) independent decomposition of $\mathscr{N}$.
		\label{example:hernandez:vargas}
	\end{example}

	\begin{algorithm}
		\SetAlgoLined
		\bigskip
		{\bf{STEP 1}} {{\bf{Input}}: reaction network $\mathscr{N}$}\\
		\bigskip
		
		{\bf{STEP 2}} {Determine the linkage classes $\mathscr{L}_\alpha$ and the reaction sets $R_\alpha$
			for $\alpha = 1,2, ... , \ell$}\\
		\bigskip
		
		{\bf{STEP 3}} {Identify the transpose of the incidence matrix per linkage class, and correspondingly find a maximal linearly independent set of vectors as rows of the transpose of the incidence matrix per linkage class}\\
		\For{$\alpha = 1$ to $\ell$}{
			${I_{a,\alpha}}^T=$ transpose of the incidence matrix restricted to linkage class $\alpha$\\
			$B_\alpha=$ maximal linearly independent set of vectors in the incidence matrix ${I_{a,\alpha}}^T$ which is restricted to linkage class $\alpha$\\
			proceed to STEP 4\\
			proceed to STEP 5
		}
		\bigskip
		{\bf{STEP 4}} {Identify vertices and edges of the coordinate graph $G_\alpha = (V_\alpha,E_\alpha)$}\\
		\For{$q = 1$ to $|B_\alpha|$}{
			$v_q:=R_q$\\
		}
		$V_\alpha=\{v_q|q=1,\ldots,|B_\alpha|\}$\\
		\For{$R_k \notin B_\alpha$}{
			$R_k= \displaystyle \sum_j a_{k,j} R_{i_j}$ for elements $R_{i_j} \in B_\alpha$\\
			\eIf{$a_{k,j_1} \ne 0$ and $a_{k,j_2}\ne 0$}{
				$(v_{j_1},v_{j_2}) \in E_\alpha$
			}{
			}
		}
		\bigskip
		{\bf{STEP 5}} {Check decomposition from coordinate graph of $G_\alpha = (V_\alpha,E_\alpha)$ per linkage class}\\
		\eIf{$G_\alpha$ is not connected}{
			there is no nontrivial incidence independent decomposition for $R_\alpha$
		}{
			reaction vectors $R_k$ corresponding to vertices
			belonging in the same connected component, together with the reaction vectors in their span
			constitute a partition of $R_\alpha$ in the
			incidence independent decomposition of
			$R_\alpha$
		}
		\bigskip
		{\bf{STEP 6}} {\bf{Output}}: {The finest incidence independent decomposition}\\
		The collection of partitions of $R_\alpha$ induced by $G_\alpha = (V_\alpha,E_\alpha)$ over all linkage classes is the finest incidence independent decomposition.\\	
		\bigskip
		\caption{Method for finding the finest incidence independent decomposition of a reaction network}
		\label{algorithm:incidence:independent}
	\end{algorithm}

	\begin{remark}
		The given main theorem and 5-step method of finding an independent decomposition of a CRN are also applicable in finding incidence independent decomposition, and thus giving an analogue for incidence independent decomposition. We just replace the transpose of the stoichiometric matrix by the transpose of the incidence matrix.
		There are differences between independence and incidence independence: the former is a stoichiometric property, while the latter is more closely related to the graph theoretic aspects. In particular, if there are at least two linkage classes, the linkage class decomposition is a nontrivial incidence independent decomposition. We incorporate this property in the construction of Algorithm \ref{algorithm:incidence:independent}, which 
		provides a method of finding the finest incidence independent decomposition of a reaction network.
		In addition, any set of subnetworks with just a single complex in common and whose reaction sets are pairwise disjoint, form an incidence independent decomposition, which is embedded in the algorithm. We use Example \ref{example:embedded} to illustrate this case.
		\label{remark:incidence:independent}
	\end{remark}
	
	\begin{example}
		Consider a CRN with reactions $A \to B$ and $B \to C$. Note that there is only one linkage class with just a single complex in common (i.e., complex $B$) as described in Remark \ref{remark:incidence:independent}. We have the following transpose of the incidence matrix:
		\[{I_a}^T=\left[ {\begin{array}{*{20}{c}}
				{ - 1}&1&0\\
				0&{ - 1}&1
		\end{array}} \right].\]	
		The matrix yields coordinate graph with two vertices yielding two subnetworks for the CRN: namely, $\{A \to B\}$ and $\{B \to C\}$, which correspond to an incidence independent decomposition, as expected from Remark \ref{remark:incidence:independent}.
		\label{example:embedded}
	\end{example}
	
	\begin{example}
		Consider Example \ref{example:hernandez:vargas}. The transpose of incidence matrix is as follows:
		\[{I_a}^T=\left[ {\begin{array}{*{20}{c}}
				{ - 1}&1&0&0&0\\
				0&{ 0}&-1&1&0\\
				{ 0}&1&-1&0&0\\
				0&0&{ 0}&1&-1
		\end{array}} \right]\]
		
		The network has only one linkage class. Let $R_i$ be the $i$th row of the matrix. A basis for the row space of the matrix is $\left\{ {{R_1},{R_2},{R_3},R_4} \right\}$.
		
		The coordinate graph, with vertices $v_1=R_1$, $v_2=R_2$, $v_3=R_3$, and $v_4=R_4$, has four components yielding the following partition of the reaction set that gives a nontrivial incidence independent decomposition of $R$:
		\begin{enumerate}
			\item $P_1= \{R_1\}$,
			\item $P_2= \{R_2\}$,
			\item $P_3= \{R_3\}$, and
			\item $P_4= \{R_4\}$.
		\end{enumerate}
		Note that the four components of the coordinate graph give rise to four subnetworks in the incidence independent decomposition of $\mathscr{N}$.
		\label{example:incidence:independent}
	\end{example}
	
	\subsection{Reaction networks with independent subnetwork of zero deficiency}
	\label{subsection:defzero:independent:subnetwork}
	
	We first consider the following Propositions \ref{def0:prop:feinberg1} and \ref{def0:prop:feinberg2}
	from \cite{feinberg12,feinberg}.
	After getting an independent decomposition, one may check if a network has an embedded deficiency zero subnetwork under independent decomposition, i.e., one of the subnetworks has deficiency zero and apply such propositions.
	
	\begin{proposition}
		\label{def0:prop:feinberg1}
		Consider a reaction system (not necessarily mass action) for which the underlying reaction network has zero deficiency. If $c^*$ is a steady
		state of the corresponding differential equations and $y_i$ is
		a complex in the network, then ${\rm supp \ } c^*$ contains ${\rm supp \ } y_i$
		only if $y_i$ is a member of a terminal strong linkage class.
		In addition, if ${\rm supp \ } c^*$ contains ${\rm supp \ } y_i$, then ${\rm supp \ } c^*$ also contains
		the support of each complex in the terminal strong
		linkage class to which $y_i$ belongs.
	\end{proposition}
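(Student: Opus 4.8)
The plan is to reduce the proposition to a property of complex-balanced fluxes, and the first, decisive step is to observe that zero deficiency forces every steady state to be complex balanced. Writing $s=\dim S=\text{rank}(N)$ and using $N=YI_a$ together with the standard identity $\dim(\text{im } I_a)=n-l$, a rank--nullity computation for $Y$ restricted to $\text{im } I_a$ gives $\delta=n-l-s=\dim(\ker Y\cap \text{im } I_a)$. Hence $\delta=0$ means precisely that $Y$ is injective on $\text{im } I_a$. Since $c^*$ is a steady state, $NK(c^*)=Y\,I_aK(c^*)=0$, so $I_aK(c^*)\in \ker Y\cap \text{im } I_a=\{0\}$; that is, $I_aK(c^*)=0$. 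I would isolate this as a lemma: for a zero-deficiency network, every steady state is complex balanced.

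Next I would read off what $I_aK(c^*)=0$ says combinatorially. Setting $g:=K(c^*)\ge 0$ and evaluating the $C$-component of $I_ag$ for each complex $C$ yields $\sum_{r:\,\cdot\to C}g_r=\sum_{r:\,C\to\cdot}g_r$, so $g$ is a nonnegative circulation on the reaction graph. Two facts then drive everything. First, the positivity axiom says $g_r=K_r(c^*)>0$ exactly when $\text{supp}(\text{reactant of }r)\subseteq \text{supp } c^*$; writing $D=\{C\in\mathscr{C}\mid \text{supp } C\subseteq \text{supp } c^*\}$, this means every reaction whose reactant lies in $D$ carries positive flux. Second, I would prove a \emph{propagation} lemma: if $C\in D$ and $C\to C'$ is a reaction, then $C'\in D$. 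Indeed $C\in D$ forces $g_{C\to C'}>0$, so $C'$ has positive inflow, so by circulation balance $C'$ has positive outflow, so $C'$ is the reactant of some positive-flux reaction and hence $C'\in D$. Iterating, every complex reachable from a complex of $D$ again lies in $D$.

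The remaining ingredient is that a nonnegative circulation decomposes into directed cycles, whence $g_r=0$ for every reaction $r:C\to C'$ joining two distinct strong linkage classes (such an edge lies on no cycle). With these three facts the proposition falls out. For the "only if" part, suppose $\text{supp } c^*\supseteq \text{supp } y_i$, i.e. $y_i\in D$, and that the strong linkage class $L$ of $y_i$ is not terminal; then some $E\in L$ admits a reaction $E\to F$ with $F\notin L$. Since $E$ is reachable from $y_i$ inside $L$, propagation gives $E\in D$, so $g_{E\to F}>0$; but $E$ and $F$ lie in different strong linkage classes, forcing $g_{E\to F}=0$, a contradiction. (The degenerate case where $y_i$ has no outgoing reaction is immediate, since then $\{y_i\}$ is its own, necessarily terminal, strong linkage class.) For the second assertion, every complex of $L$ is reachable from $y_i$ because $L$ is strongly connected, so propagation yields $L\subseteq D$, i.e. $\text{supp } c^*$ contains the support of each complex of $L$.

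I expect the main obstacle to be the first step rather than the graph theory: one must identify $\delta$ with $\dim(\ker Y\cap \text{im } I_a)$ and thereby upgrade "steady state" to "complex balanced," since this is exactly where zero deficiency enters, with no appeal to mass action or any other kinetic assumption. Once $I_aK(c^*)=0$ is secured, the cycle decomposition of the circulation and the positivity axiom are routine, and the only care needed is the bookkeeping of the degenerate single-complex and no-outflow cases.
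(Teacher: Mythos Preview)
Your argument is correct and is essentially Feinberg's original proof: identify $\delta=\dim(\ker Y\cap\operatorname{im}I_a)$, conclude that deficiency zero forces $I_aK(c^*)=0$, then analyze the resulting nonnegative circulation via the positivity axiom, forward propagation along arcs, and cycle decomposition. Note, however, that the paper does \emph{not} give its own proof of this proposition; it is quoted verbatim from Feinberg's lecture notes and 1987 paper (\cite{feinberg12,feinberg}) and used as a black box to draw conclusions about steady states of subnetworks in an independent decomposition. So there is nothing in the present paper to compare your proof against, and your write-up would in fact supply what the paper only cites.
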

	
	The proposition above implies that if the network has deficiency zero and if $c^*$ is a steady
	state, then ${\rm supp \ } c^*$ cannot contain the support of any
	complex that does not lie in a terminal strong linkage
	class. In other words, if $y_i$ is a complex that does not lie in a
	terminal strong linkage class then, at $c^*$,
	at least one species appearing in complex $y_i$ must have a
	zero component \cite{feinberg12}.
	
	The proposition can be viewed as a (slight) generalization of the classical result that any deficiency zero kinetic system with a positive equilibrium is weakly reversible, which is usually proved by combining Feinberg's result that any deficiency zero system with a positive equilibrium is complex balanced and Horn's result that any complex balanced system is weakly reversible. This is because if $c^*$ is a positive vector, ${\rm supp \ }c^*$ contains ${\rm supp \ }y$ for any complex. Hence, it follows that any complex is in a terminal strong linkage class, i.e., any linkage class is a terminal strong linkage class, which is equivalent to weak reversibility (s. \cite{feinberg} for more details). 
	
	\begin{proposition}
		\label{def0:prop:feinberg2}
		Suppose that for a given reaction system (not necessarily mass action), the ODE admits a steady state $c^*$. Suppose also that $y_i$ and $y_j$ are complexes in the underlying network such that there exists a directed reaction pathway leading from $y_i$ to $y_j$. If ${\rm supp \ } y_i$ is contained in ${\rm supp \ } c^*$ then ${\rm supp \ } y_j$ is also contained in ${\rm supp \ } c^*$. In particular, ${\rm supp \ } c^*$ either contains the supports of all complexes within a given strong linkage class or else it contains the support of none of them.		
	\end{proposition}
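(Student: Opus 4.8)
The plan is to reduce the statement to a single reaction step and then propagate along the given pathway by induction. Concretely, the core claim I would isolate is the following: if $C_a \to C_b \in \mathscr{R}$ is a single reaction and ${\rm supp \ } C_a \subseteq {\rm supp \ } c^*$, then ${\rm supp \ } C_b \subseteq {\rm supp \ } c^*$. Granting this, a directed reaction pathway from $y_i$ to $y_j$ is a finite sequence of reactions $y_i = z_0 \to z_1 \to \cdots \to z_p = y_j$, and applying the core claim $p$ times carries the support containment from $z_0$ all the way to $z_p$. The final ``in particular'' assertion then follows immediately: within a strong linkage class every complex is reachable from every other by a directed pathway, so if ${\rm supp \ } c^*$ contains the support of even one complex of the class it must contain the supports of all of them, which yields the all-or-none dichotomy.

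To prove the core claim I would argue by contradiction, using the steady state equation $f(c^*) = NK(c^*) = 0$ componentwise together with the positivity property of the kinetics. First, since ${\rm supp \ } C_a \subseteq {\rm supp \ } c^*$, the positivity property gives $K_{C_a \to C_b}(c^*) > 0$. Suppose, for contradiction, that ${\rm supp \ } C_b \not\subseteq {\rm supp \ } c^*$, and pick a species $X_s$ with $(C_b)_s > 0$ but $c^*_s = 0$. Note that $X_s \notin {\rm supp \ } C_a$, because ${\rm supp \ } C_a \subseteq {\rm supp \ } c^*$ while $c^*_s = 0$; hence in the reaction $C_a \to C_b$ the species $X_s$ is produced but not consumed.

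The heart of the argument is the sign analysis of the $s$-th component of the SFRF at $c^*$, namely $0 = f_s(c^*) = \sum_{C_p \to C_q \in \mathscr{R}} K_{C_p \to C_q}(c^*)\,[(C_q)_s - (C_p)_s]$. I would partition the reactions according to how $X_s$ appears. Whenever $X_s \in {\rm supp \ } C_p$, the positivity property forces $K_{C_p \to C_q}(c^*) = 0$ (since $c^*_s = 0$ means ${\rm supp \ } C_p \not\subseteq {\rm supp \ } c^*$), so every consumption term vanishes; the surviving terms are exactly those reactions in which $X_s$ appears in the product only, each contributing $K_{C_p \to C_q}(c^*)\,(C_q)_s \ge 0$. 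Thus $f_s(c^*)$ is a sum of nonnegative terms equal to zero, so each term is zero. In particular the term for $C_a \to C_b$ gives $K_{C_a \to C_b}(c^*)\,(C_b)_s = 0$, contradicting $K_{C_a \to C_b}(c^*) > 0$ and $(C_b)_s > 0$. This contradiction establishes the core claim.

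The step I expect to be the main obstacle is precisely this sign bookkeeping: the argument works only because at a zero-component species $X_s$ every reaction that \emph{consumes} $X_s$ has vanishing rate, which is exactly what the positivity property guarantees. Care is needed to verify that no negative contributions to $f_s(c^*)$ survive — reactions with $X_s$ in both reactant and product, or in the reactant alone, must all be shown to have zero rate — so that the nonnegativity of the remaining sum can be invoked. Once this is secured, the reduction to a single step and the induction along the pathway are routine, and the strong-linkage-class consequence is immediate from reachability.
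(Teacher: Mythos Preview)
Your argument is correct and is essentially the standard Feinberg proof of this fact. Note, however, that the paper does not supply its own proof of this proposition: it is quoted as a known result from \cite{feinberg12,feinberg}, so there is no in-paper proof to compare against. Your reduction to a single reaction step, the sign analysis of $f_s(c^*)$ using the positivity property to kill every term with $X_s$ in the reactant, and the induction along the directed pathway match the original argument; the strong-linkage-class consequence is exactly the reachability observation you give.
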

	
	In a deficiency zero system, it is shown in Fari\~nas et al. \cite{FAML2020} that the following holds:
	\begin{proposition}\label{ZDD_prop}
		Let $\mathscr{D}: \mathscr{N} = \mathscr{N}_1 \cup \ldots \cup \mathscr{N}_k$ be a decomposition of a zero deficiency kinetic system $(\mathscr{N}, K)$. Then the following statements are equivalent:
		\item[i.] $\mathscr{D}$ is incidence independent.
		\item[ii.] $\mathscr{D}$ is independent and ZDD (i.e., ``zero deficiency decomposition'' or all subnetworks have zero deficiency).
		\item[iii.] $\mathscr{D}$ is bi-independent.
	\end{proposition}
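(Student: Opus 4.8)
The plan is to prove the three statements equivalent by establishing the cycle (i) $\Rightarrow$ (ii) $\Rightarrow$ (iii) $\Rightarrow$ (i), leaning entirely on the deficiency bookkeeping recorded in Section \ref{sect:decomposition}. First I would fix notation, writing $n_i,l_i,s_i,\delta_i$ for the number of complexes, number of linkage classes, rank, and deficiency of $\mathscr{N}_i$, so that $\delta_i = n_i - l_i - s_i$ and summing over $i$ yields the single identity
$$\sum_{i=1}^k \delta_i = \sum_{i=1}^k (n_i - l_i) - \sum_{i=1}^k s_i.$$
The facts I would isolate up front are: (a) the hypothesis $\delta = n - l - s = 0$ forces $n - l = s$; (b) each $\delta_i \ge 0$, so $\sum \delta_i \ge 0$; (c) the characterization that $\mathscr{D}$ is incidence independent exactly when $n - l = \sum(n_i - l_i)$, and that $\mathscr{D}$ is independent exactly when $s = \sum s_i$ (the sum $S = S_1 + \cdots + S_k$ is automatic, so independence is precisely the statement that the dimensions add); and (d) the two preliminary inequalities $\delta \le \sum \delta_i$ for independent decompositions and $\delta \ge \sum \delta_i$ for incidence independent ones.

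For (i) $\Rightarrow$ (ii) I would argue by a squeeze. Incidence independence gives $\delta \ge \sum\delta_i$, and since $\delta = 0$ while each $\delta_i \ge 0$ we obtain $0 \ge \sum\delta_i \ge 0$, forcing $\sum\delta_i = 0$ and hence $\delta_i = 0$ for every $i$, i.e.\ ZDD. To extract independence I would feed $\sum\delta_i = 0$ back into the displayed identity together with the incidence-independence equality $\sum(n_i - l_i) = n - l = s$, obtaining $\sum s_i = \sum(n_i - l_i) - \sum\delta_i = s$; since $S = S_1 + \cdots + S_k$ holds automatically, equality of dimensions upgrades this to $S = S_1 \oplus \cdots \oplus S_k$, which is independence. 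This is the crux of the whole argument, and the step I expect to be the main obstacle: it is the only place where the nonnegativity of subnetwork deficiency must be played against the zero-deficiency hypothesis to pin $\sum\delta_i = 0$, and then the deficiency identity must be reused to convert that into the dimension count witnessing independence.

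The remaining implications are short bookkeeping. For (ii) $\Rightarrow$ (iii), ZDD gives $\sum\delta_i = 0$, so the identity yields $\sum(n_i - l_i) = \sum s_i$; independence supplies $\sum s_i = s = n - l$, whence $\sum(n_i - l_i) = n - l$, which is incidence independence, and combined with independence this is exactly bi-independence by definition. Finally (iii) $\Rightarrow$ (i) is immediate, since bi-independence includes incidence independence. As a closing sanity check I would confirm that each inequality invoked, namely $\sum s_i \ge s$ and $\delta_i \ge 0$, is a general fact independent of the proposition's hypotheses, so that the only essential use of $\delta = 0$ is in the squeeze of the first implication.
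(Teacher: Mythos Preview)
Your argument is correct. Note, however, that this proposition is not proved in the present paper: it is quoted from Fari\~nas et al.\ \cite{FAML2020}, so there is no in-paper proof to compare against. That said, your proof uses exactly the apparatus assembled in Section~\ref{sect:decomposition} (the characterizations $n-l=\sum(n_i-l_i)$ and $s=\sum s_i$, the two deficiency inequalities, and nonnegativity of each $\delta_i$), and the cycle (i)~$\Rightarrow$~(ii)~$\Rightarrow$~(iii)~$\Rightarrow$~(i) via the squeeze $0=\delta\ge\sum\delta_i\ge 0$ is the natural route; this is the standard bookkeeping proof one would expect in the cited source as well.
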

	In particular, incidence independence implies independence, but not conversely. 
	
	We now introduce the following easy proposition, which means that we can look at the finest independent decomposition and check if it has a subnetwork of deficiency zero.
	
	\begin{proposition}
		If the finest independent (incidence independent) decomposition of a reaction network has a deficiency zero subnetwork then it has an embedded deficiency zero independent (incidence independent) subnetwork.
	\end{proposition}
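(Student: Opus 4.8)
The plan is to observe that the hypothesis already exhibits the required object, so that essentially nothing needs to be constructed; the only point deserving justification is that the deficiency-zero piece may legitimately be viewed as sitting inside a bona fide independent decomposition of the whole network. First I would invoke Remark \ref{remark:finest:independent} together with the uniqueness established in Theorem \ref{lemma:possible:independent} to record that the finest independent decomposition $\mathscr{N} = \mathscr{N}_1 \cup \cdots \cup \mathscr{N}_k$ is itself an independent decomposition of $\mathscr{N}$. By hypothesis one of its subnetworks, say $\mathscr{N}_1$, satisfies $\delta_1 = 0$, and this $\mathscr{N}_1$ is already a deficiency-zero subnetwork occurring in an independent decomposition, i.e. an embedded deficiency zero independent subnetwork.

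To make the embedding completely explicit, and to obtain the two-part form in which the deficiency-zero propositions are applied, I would next isolate $\mathscr{N}_1$ by coarsening. Theorem \ref{lemma:possible:independent} tells us that the independent decompositions of the reaction set are precisely the finest decomposition together with its coarsenings; hence the coarsening $\{\mathscr{N}_1,\ \mathscr{N}_2 \cup \cdots \cup \mathscr{N}_k\}$, obtained by merging the remaining subnetworks into a single one, is again independent. This displays $\mathscr{N}_1$ as a deficiency-zero subnetwork paired with its complement in an independent decomposition of $\mathscr{N}$, which is exactly the desired conclusion.

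For the incidence independent case I would run the identical argument, replacing the transpose of the stoichiometric matrix by the transpose of the incidence matrix throughout. Algorithm \ref{algorithm:incidence:independent} produces the finest incidence independent decomposition, the incidence independent analogue of Theorem \ref{lemma:possible:independent} (established there by the same reasoning, as stated in that theorem) guarantees that coarsenings remain incidence independent, and merging every subnetwork except the deficiency-zero one yields the embedded incidence independent subnetwork.

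I do not expect any genuine obstacle here, which is consistent with the proposition being labelled easy: the only content beyond unwinding the definition of ``embedded'' is the closure of independence (respectively incidence independence) under coarsening, and that is supplied verbatim by Theorem \ref{lemma:possible:independent}. The sole care required is notational, namely keeping the stoichiometric and incidence versions of the finest decomposition and of Theorem \ref{lemma:possible:independent} correctly paired when writing up the two cases in parallel.
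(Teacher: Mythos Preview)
Your proposal is correct and matches the paper's approach: the paper's proof is the single sentence ``A deficiency zero subnetwork in the finest independent (incidence independent) decomposition is an embedded deficiency zero independent (incidence independent) subnetwork of the network,'' which is exactly your first paragraph. Your additional coarsening step to isolate $\mathscr{N}_1$ against the union of the rest is extra (and harmless) scaffolding that the paper does not bother with.
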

	
	\begin{proof}
		A deficiency zero subnetwork in the finest independent (incidence independent) decomposition is an embedded deficiency zero independent (incidence independent) subnetwork of the network.
	\end{proof}
	
	Note that an independent decomposition is not always incidence independent and hence, the finest independent decomposition may or may not be incidence independent.
	
	\begin{proposition}
		Suppose the finest bi-independent decomposition $\mathscr{D}$ of a network $\mathscr{N}$ (finest independent decomposition which is incidence independent) exists.
		Then, $\mathscr{N}$ has a deficiency zero subnetwork under decomposition $\mathscr{D}$ if and only if $\mathscr{N}$ has an embedded deficiency zero bi-independent subnetwork.
	\end{proposition}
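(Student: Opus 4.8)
The plan is to prove the two implications separately, with the forward direction being immediate and the backward direction carrying essentially all of the work. For the forward direction, note that $\mathscr{D}$ is itself a bi-independent decomposition of $\mathscr{N}$; hence if one of its subnetworks has deficiency zero, that subnetwork is by definition an embedded deficiency zero bi-independent subnetwork of $\mathscr{N}$. This is exactly the one-line argument used for the preceding easy proposition and needs nothing further.

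For the backward direction, I would first pin down the structure of $\mathscr{D}$. Since $\mathscr{D}$ is the finest independent decomposition that happens also to be incidence independent, and since every bi-independent decomposition is in particular independent, Theorem \ref{lemma:possible:independent} guarantees that every bi-independent decomposition of $\mathscr{N}$ is a coarsening of $\mathscr{D}$. Now suppose $\mathscr{N}$ has an embedded deficiency zero bi-independent subnetwork, i.e.\ there is a bi-independent decomposition $\mathscr{N} = \mathscr{N}'_1 \cup \cdots \cup \mathscr{N}'_j$ with (after reindexing) $\delta(\mathscr{N}'_1) = 0$. Because this decomposition is a coarsening of $\mathscr{D}$, the part $\mathscr{N}'_1$ is a union of a subcollection of the $\mathscr{D}$-subnetworks, say $\mathscr{N}'_1 = \mathscr{N}_{i_1} \cup \cdots \cup \mathscr{N}_{i_p}$, and the goal reduces to showing that one of these pieces already has deficiency zero.

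The key step is to verify that this induced decomposition of $\mathscr{N}'_1$ is itself bi-independent. Independence is automatic: since the total stoichiometric sum $S = \bigoplus_i S_i$ over the subnetworks of $\mathscr{D}$ is direct, any subsum $S_{i_1} + \cdots + S_{i_p} = S'_1$ is again direct. Incidence independence is the delicate half, and here I expect the main obstacle: unlike a direct sum, incidence independence does not visibly restrict to subcollections by a one-line argument. I would instead use the general inequality $n - l \le \sum_i (n_i - l_i)$, which holds for \emph{any} decomposition because $\operatorname{im} I_a = \sum_i \operatorname{im} I_{a,i}$ and $\dim \operatorname{im} I_{a,i} = n_i - l_i$, so dimension subadditivity applies. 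Applying this inequality to the refinement of each coarsening part $\mathscr{N}'_m$ into its $\mathscr{D}$-pieces and summing over $m$, then comparing with the two equalities $n - l = \sum_m (n'_m - l'_m)$ (incidence independence of the coarsening) and $n - l = \sum_{\text{all } i}(n_i - l_i)$ (incidence independence of $\mathscr{D}$), forces $n-l \le n-l$ and hence equality at every stage. In particular the refinement of $\mathscr{N}'_1$ is incidence independent, so it is bi-independent.

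Finally, I would invoke the deficiency equation $\delta = \delta_1 + \cdots + \delta_k$ valid for bi-independent decompositions (established earlier in the excerpt), applied to $\mathscr{N}'_1 = \mathscr{N}_{i_1} \cup \cdots \cup \mathscr{N}_{i_p}$: this yields $0 = \delta(\mathscr{N}'_1) = \delta_{i_1} + \cdots + \delta_{i_p}$. Since each deficiency is nonnegative, every $\delta_{i_k} = 0$, so $\mathscr{D}$ contains a deficiency zero subnetwork, completing the backward implication and hence the proof.
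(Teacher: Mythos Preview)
Your proof is correct and follows the same core strategy as the paper: pass from an arbitrary bi-independent decomposition containing a deficiency-zero block down to the finest decomposition $\mathscr{D}$, and use the additivity $\delta = \sum \delta_i$ for bi-independent decompositions together with nonnegativity of deficiency to conclude that the $\mathscr{D}$-pieces inside that block all have deficiency zero.

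The execution differs in two respects. First, the paper refines iteratively---splitting one block $\mathscr{N}_k = \mathscr{N}_{k_1} \cup \mathscr{N}_{k_2}$ at a time and invoking $0 = \delta_k = \delta_{k_1} + \delta_{k_2}$ at each step---whereas you jump straight to the full refinement $\mathscr{N}'_1 = \mathscr{N}_{i_1} \cup \cdots \cup \mathscr{N}_{i_p}$ into $\mathscr{D}$-pieces and apply the deficiency equation once. Second, and more substantively, you supply the justification that the paper's terse argument omits: why the refined decomposition of $\mathscr{N}'_1$ (equivalently, the intermediate decompositions in the paper's iteration) is again bi-independent, so that the deficiency sum formula actually applies. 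Your sandwich argument with the inequality $n-l \le \sum_i (n_i - l_i)$ summed over the coarsening blocks is exactly the missing step; the paper simply writes $\delta_k = \delta_{k_1} + \delta_{k_2}$ without addressing this. So your version is a more complete write-up of what is essentially the same idea.
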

	
	\begin{proof}
		Left to right direction is obvious. Conversely, take $\mathscr{N}_k$ as the embedded deficiency zero subnetwork. If the decomposition is not the finest, we decompose $\mathscr{N}_k=\mathscr{N}_{k_1} \cup \mathscr{N}_{k_2}$. Note that the deficiency $0=\delta _k = \delta _{k_1}+\delta _{k_2}=0+0$. We can repeat the process and one gets the finest for $\mathscr{N}_k$ with subnetwork of deficiency zero.
	\end{proof}
	
	\begin{table}
		\begin{center}
			\caption{Details for computation in Example \ref{ex:def0:ex}}
			\label{table:def0:ex}       
			\begin{tabular}{lcccc}
				\noalign{\smallskip}\hline\noalign{\smallskip}
				& reaction network  & rank & deficiency\\
				\noalign{\smallskip}\hline\noalign{\smallskip}
				\noalign{\smallskip}\hline\noalign{\smallskip}
				$\mathscr{N}$ & $B \to 2C$ & 2 & 1\\
				& $B+A \mathbin{\lower.3ex\hbox{$\buildrel\textstyle\rightarrow\over
						{\smash{\leftarrow}\vphantom{_{\vbox to.5ex{\vss}}}}$}} A$ & &\\
				& $B \to 0$ &  & \\
				\noalign{\smallskip}\hline\noalign{\smallskip}
				$\mathscr{N}_1$ & $B \to 2C$ & 1 & 0\\
				\noalign{\smallskip}\hline\noalign{\smallskip}
				$\mathscr{N}_2$		 & $B+A \mathbin{\lower.3ex\hbox{$\buildrel\textstyle\rightarrow\over
						{\smash{\leftarrow}\vphantom{_{\vbox to.5ex{\vss}}}}$}} A$ &1 &1\\
				& $B \to 0$ &  & \\
				\noalign{\smallskip}\hline\noalign{\smallskip}
			\end{tabular}
		\end{center}
	\end{table}
	
	\begin{example}
		\label{ex:def0:ex}	
		Consider the network $\mathscr{N}$ in Table \ref{table:def0:ex}. We can obtain from the method of finding an independent decomposition that $\mathscr{D}=\{\mathscr{N}_1,\mathscr{N}_2\}$ is such a decomposition. In particular, it is the finest one. $\mathscr{N}_1$ is an embedded independent subnetwork in $\mathscr{N}$ with deficiency zero. Using Proposition \ref{def0:prop:feinberg1}, we can deduce that if $c^*$ is steady state of $\mathscr{N}_1$ (in the set of nonnegative orthant of $\mathbb{R}^3$), the component of $c^*$ for the second position (i.e., concentration of $B$) is 0. By the Feinberg Decomposition Theorem, it is not possible that $c^*$ of $\mathscr{N}$ is positive, i.e., all components for the concentrations are positive. In addition, by inspecting $\mathscr{N}_2$ and by using Proposition \ref{def0:prop:feinberg2}, both $c_B$ and $c_A$ are zero, and hence, the only possible $c^*$ in the nonnegative orthant has the following form: $(0,0,0)$ or $(0,0,c_C)$ where $c_C>0$. We note that this result is not specific for mass action as stated in Propositions \ref{def0:prop:feinberg1} and \ref{def0:prop:feinberg2}.
	\end{example}

	\begin{example}
		We again consider Example \ref{example:hernandez:vargas} for the model of Hernandez-Vargas and Velasco-Hernandez with the following reaction network:
		\begin{align*}
			{R_1}&:{U} + {V} \to {I} + {V}\\
			{R_2}&:{I} \to 0\\
			{R_3}&:I \to I+V\\
			{R_4}&:V \to 0
		\end{align*}
		We already obtained the following independent subnetworks:
		$\mathscr{N}_1: \{R_1\}$,
		$\mathscr{N}_2: \{R_2\}$, and
		$\mathscr{N}_3: \{R_3,R_4\}$.
		Note that both $\mathscr{N}_1$ and $\mathscr{N}_2$ have deficiency zero. Suppose there is a steady state $c^*=(c_U,c_I,c_V)$ in the set of nonnegative orthant. By Proposition \ref{def0:prop:feinberg1}, from $\mathscr{N}_1$ we can deduce that $c_U$ or $c_V$ is zero. In addition, from $\mathscr{N}_2$, we have $c_I=0$. From $\mathscr{N}_3$ and by Proposition \ref{def0:prop:feinberg2}, both $c_I$ and $c_V$ are zero. These imply that the steady states of the (whole) reaction system (not necessarily mass action) in the set of nonnegative orthant is of the form $(0,0,0)$ or $(c_U,0,0)$ where $c_U>0$. A positive steady state (i.e., all components are positive) is not possible.
	\end{example}
	
	\subsection{Number of independent and incidence independent decompositions of a reaction network}
	\label{section:results2}
	
	We proceed with the following definition which is useful for formulations of the succeeding results.
	
	\begin{definition}
		Let $\mathscr{N}$ be a reaction network under a decomposition $\mathscr{D}$. The {\bf length} of $\mathscr{D}$ is the number of subnetworks of $\mathscr{N}$ under $\mathscr{D}$.
	\end{definition}
	
	Let $p$ and $q$ be the lengths of the finest independent decomposition and of the finest incidence independent decomposition, respectively. We shall count the number of such independent and incidence independent decompositions of a CRN, which we denote by $B_p$ and $B_q$, respectively.
	
	\begin{corollary}
		Let $\mathscr{N}$ be a CRN with $p,q \ge 2$ be the length of the finest independent and incidence independent decompositions, respectively. Then the number of independent and incidence independent decompositions of $\mathscr{N}$ are given by
		$${B_p} = \sum\limits_{k = 1}^p {\left( {\begin{array}{*{20}{c}}
					{p - 1}\\
					{k - 1}
			\end{array}} \right)} {B_{p - k}} {\text{ and }}{B_q} = \sum\limits_{k = 1}^q {\left( {\begin{array}{*{20}{c}}
					{q - 1}\\
					{k - 1}
			\end{array}} \right)} {B_{q - k}}, {\text{ respectively.}}$$
		\label{thm:number:independent}
	\end{corollary}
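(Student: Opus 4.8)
The plan is to reduce the enumeration to a classical count of set partitions. By Theorem~\ref{lemma:possible:independent}, the independent decompositions of $\mathscr{N}$ are \emph{exactly} the coarsenings of the unique finest independent decomposition, which I write as $\mathscr{N} = \mathscr{N}_1 \cup \cdots \cup \mathscr{N}_p$. A coarsening of this decomposition is obtained precisely by grouping the finest blocks $\mathscr{N}_1,\ldots,\mathscr{N}_p$ into nonempty, pairwise disjoint classes whose union is all of them---in other words, by choosing a partition of the index set $\{1,\ldots,p\}$. I would first argue that this assignment is a bijection: distinct partitions of $\{1,\ldots,p\}$ merge the finest blocks in different ways and therefore produce distinct independent decompositions, while Theorem~\ref{lemma:possible:independent} guarantees that every independent decomposition arises as such a coarsening (and, conversely, that every such coarsening is genuinely independent). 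Consequently $B_p$ equals the number of partitions of a $p$-element set, i.e. the $p$-th Bell number.

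It then remains to check that the Bell numbers obey the stated recurrence. I would fix the distinguished index $p$ and classify each partition of $\{1,\ldots,p\}$ by the size $k$ of the block containing $p$, with $k$ ranging from $1$ to $p$. The other $k-1$ members of that block are selected from the remaining $p-1$ indices in $\binom{p-1}{k-1}$ ways, and the leftover $p-k$ indices may be partitioned freely, in $B_{p-k}$ ways (with the convention $B_0=1$ accounting for the case $k=p$). Summing over $k$ gives
$$B_p = \sum_{k=1}^{p} \binom{p-1}{k-1}\, B_{p-k},$$
which is exactly the asserted formula; substituting $j=p-k$ and using $\binom{p-1}{k-1}=\binom{p-1}{j}$ recovers the familiar form $B_p=\sum_{j=0}^{p-1}\binom{p-1}{j}B_j$.

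The same reasoning applies verbatim to incidence independent decompositions. Theorem~\ref{lemma:possible:independent} also yields a unique finest incidence independent decomposition, of length $q$, whose coarsenings are precisely the incidence independent decompositions; counting these as partitions of a $q$-element set and fixing a distinguished block produces $B_q = \sum_{k=1}^{q}\binom{q-1}{k-1}\, B_{q-k}$, as claimed.

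There is no deep obstacle in this argument---the content is essentially combinatorial bookkeeping. The one step requiring genuine care is the bijection in the first paragraph: I must invoke Theorem~\ref{lemma:possible:independent} precisely, both to ensure that every coarsening of the finest decomposition really is independent (so that no set partition is excluded from the count) and that no independent decomposition fails to be a coarsening (so that none is missed). Once that correspondence is pinned down, the recurrence is simply the standard identity for Bell numbers.
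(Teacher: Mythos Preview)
Your proposal is correct and follows essentially the same approach as the paper: invoke Theorem~\ref{lemma:possible:independent} to identify the independent (resp.\ incidence independent) decompositions with coarsenings of the finest one, put these in bijection with set partitions of a $p$-element (resp.\ $q$-element) set, and conclude that the count is the Bell number. The paper's own proof is terser---it simply asserts that the count is the Bell number without rederiving the recurrence---whereas you additionally supply the standard ``fix the block containing a distinguished element'' argument for $B_p=\sum_{k=1}^{p}\binom{p-1}{k-1}B_{p-k}$; this extra detail is sound and helpful.
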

	
	\begin{proof}
		It follows from Theorem \ref{lemma:possible:independent}, that the number of independent decompositions with $p$ independent subnetworks is precisely the number of partitions of a set, i.e., nonempty and pairwise disjoint subsets of the set has union equals the set itself. This number is given by the Bell number. We prove the incidence independent decompositions in a similar manner. 
	\end{proof}
	\begin{proposition}
		\label{thm:bound:independent}
		Let $\mathscr{N}$ be a CRN with $r \ge 2$ reactions. If $P,Q$ are the number of independent and incidence independent decompositions of $\mathscr{N}$, then
		$$1 \le P,Q \le {B_r} = \sum\limits_{k = 1}^r {\left( {\begin{array}{*{20}{c}}
					{r - 1}\\
					{k - 1}
			\end{array}} \right)} {B_{r - k}}.$$
	\end{proposition}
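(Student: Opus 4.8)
The plan is to reduce the bound to the counting result already established in Corollary \ref{thm:number:independent} together with the monotonicity of the Bell numbers. First I would dispose of the lower bound: the trivial decomposition, whose single subnetwork is $\mathscr{N}$ itself, is vacuously both independent and incidence independent, since the relevant sum of stoichiometric subspaces (resp.\ incidence maps) then has a single summand and is automatically direct. Hence every CRN admits at least one independent and at least one incidence independent decomposition, giving $1 \le P$ and $1 \le Q$.

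For the upper bound, let $p$ and $q$ denote the lengths of the finest independent and finest incidence independent decompositions of $\mathscr{N}$, as provided by Theorem \ref{lemma:possible:independent}. By that theorem the independent decompositions of $\mathscr{N}$ are exactly the coarsenings of the finest one, and these coarsenings are in bijection with the set partitions of the $p$ blocks of the finest decomposition; hence $P = B_p$, and similarly $Q = B_q$, which is the content of Corollary \ref{thm:number:independent}. When $p = 1$ or $q = 1$ the finest decomposition is already trivial and admits only itself as a coarsening, so one has $P = 1 = B_1$ or $Q = 1 = B_1$ and the identities $P = B_p$, $Q = B_q$ persist in the degenerate case.

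The remaining step is to bound $B_p$ and $B_q$ by $B_r$. Because the finest decomposition is induced by a partition of the $r$-element reaction set $\mathscr{R}$ into nonempty blocks, its length satisfies $p \le r$, and likewise $q \le r$: one cannot split $r$ reactions into more than $r$ nonempty parts. It then suffices to observe that the Bell numbers are nondecreasing, $B_0 \le B_1 \le B_2 \le \cdots$, so that $p \le r$ forces $B_p \le B_r$ and $q \le r$ forces $B_q \le B_r$. Combining, $P = B_p \le B_r$ and $Q = B_q \le B_r$; the displayed recurrence for $B_r$ is just the standard Bell-number recurrence, recovered by the reindexing $j = r - k$ which yields $B_r = \sum_{j=0}^{r-1} \binom{r-1}{j} B_j$.

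The only points requiring justification are the handling of the degenerate lengths $p, q = 1$ (which fall outside the hypothesis $p, q \ge 2$ of Corollary \ref{thm:number:independent}, but are immediate since a trivial finest decomposition has no nontrivial coarsening) and the monotonicity of the Bell numbers. The latter is the mild obstacle, and it is elementary: interpreting $B_n$ as the number of set partitions of an $n$-element set, adjoining a new singleton block to each partition of an $n$-set injects the partitions of an $n$-set into those of an $(n+1)$-set, whence $B_n \le B_{n+1}$ and the chain of inequalities follows by induction.
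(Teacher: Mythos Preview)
Your argument is correct and follows exactly the route the paper has in mind: the paper does not supply a formal proof for this proposition but only the remark that the lower bound is attained by the trivial decomposition and the upper bound is attained when the finest decomposition has $r$ subnetworks, which is precisely your reduction via Corollary~\ref{thm:number:independent} together with $p,q\le r$ and the monotonicity of the Bell numbers. Your treatment of the degenerate case $p=1$ (resp.\ $q=1$) and the explicit justification of $B_n\le B_{n+1}$ simply make rigorous what the paper leaves implicit.
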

	
	We can see in Corollary \ref{thm:number:independent} that the Bell number is a key to get the number of independent and incidence independent decompositions of a CRN. We define $B_0=1$, and we can compute recursively $B_1=1$, $B_2=2$, $B_3=5$, and so on.
	In Proposition \ref{thm:bound:independent}, the lower bound is attained when there is only the trivial independent (incidence independent) decomposition, and the upper bound is attained when there are $r$ subnetworks in the finest independent (incidence independent) decomposition.
	For further discussion on Bell numbers, one may refer to \cite{bell:reference}.
	
	\begin{example}
		\label{ex:dec:created}
		Consider the CRN with the following reactions.
		\[\begin{array}{l}
			{R_1}:{X_1} + {X_5} \to {X_2} + {X_5}\\
			{R_2}:{X_2} \to 0\\
			{R_3}:{X_1} \to {X_1} + {X_4}\\
			{R_4}:{X_3} \to {X_4}\\
			{R_5}:{X_5} \to 0
		\end{array}\]
		We can verify that the finest independent decomposition of the CRN is given by $\mathscr{N}_1 \cup \mathscr{N}_2 \cup \mathscr{N}_3$ where $\mathscr{N}_1 =\{ R_1,R_2 \}$, $\mathscr{N}_2 =\{ R_3,R_4 \}$, and $\mathscr{N}_3 =\{ R_5 \}$. There are $B_3=5$ independent decompositions of $\mathscr{N}$. The list of such decompositions is provided in Table \ref{tab:list:ind:dec}. We can also check that there are 5 subnetworks in the finest incidence independent decomposition of $\mathscr{N}$ and hence, there are $B_5=52$ such decompositions. In this case, the upper bound for Proposition \ref{thm:bound:independent} is attained.
	\end{example}
	
	\begin{table}
		\caption{List of all independent decompositions of $\mathscr{N}$ in Example \ref{ex:dec:created}} 
		\centering 
		\begin{tabular}{llll} 
			\hline 
			Finest & The Rest of Nontrivial & Trivial \\
			Decomposition& Independent Decomposition & Decomposition\\
			\hline 
			$\{ \mathscr{N}_1, \mathscr{N}_2, \mathscr{N}_3 \}$ & $\{ \mathscr{N}_1 \cup \mathscr{N}_2, \mathscr{N}_3 \}$ & $\{\mathscr{N}\}$ \\ %
			& $\{ \mathscr{N}_1, \mathscr{N}_2 \cup \mathscr{N}_3 \}$ &  \\
			& $\{ \mathscr{N}_1 \cup \mathscr{N}_3, \mathscr{N}_2 \}$ &  \\%
			\hline 
		\end{tabular}
		\label{tab:list:ind:dec}
	\end{table}
	
	\section{Connections between weakly reversible decompositions, and independent and incidence independent decompositions}
	\label{section:results3}
	
	Komatsu and Nakajima \cite{komatsu} provided an algorithm for finding a weakly reversible decomposition of a CRN. On the other hand, Hernandez and Mendoza \cite{HM2021:WRDCKS} established an algorithm for finding a weakly reversible decomposition of a non-complex factorizable (NF) chemical kinetic system, that considers both the underlying CRN and kinetics, into weakly reversible complex factorizable (CF) subsystems.
	
	In our work, we focus on significant relationships between weakly reversible decomposition of a CRN, and its independent and incidence independent decompositions. 
	
	Weakly reversible networks are clearly the most important class of reaction networks, and it is good to observe an invariance of the property with respect to decompositions, which is reflected in the following well-known Proposition:
	
	\begin{proposition}\label{prop:WRcovering}
		A network is weakly reversible if and only if it has a weakly reversible covering.
	\end{proposition}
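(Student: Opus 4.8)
The plan is to treat the two implications separately, with essentially all of the content residing in the converse. For the forward direction I would simply observe that if $\mathscr{N}$ is weakly reversible, then the singleton family $\{\mathscr{N}\}$ is already a weakly reversible covering of $\mathscr{N}$; no construction is needed, since $\mathscr{N}$ covers itself and is weakly reversible by hypothesis.

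For the converse the useful tool is the cycle characterization of weak reversibility: a network is weakly reversible if and only if every one of its reactions $y \to y'$ lies on a directed cycle, equivalently, there is a directed path from $y'$ back to $y$ using only reactions of the network. This is just a reformulation of the condition $sl = l$ that each linkage class be strongly connected, because in a finite digraph a weakly connected component is strongly connected precisely when every arc belongs to a directed cycle. (For the nontrivial direction of this equivalence, if some linkage class had two distinct strong components, connectedness of the underlying undirected graph would force an arc between two different strong components, and such an arc cannot lie on any cycle.) I would first record this equivalence explicitly, so that it can be invoked uniformly for the subnetworks and for the whole network.

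With this in hand, suppose $\{\mathscr{N}_1, \ldots, \mathscr{N}_k\}$ is a weakly reversible covering of $\mathscr{N}$, so that $\mathscr{R} = \bigcup_i \mathscr{R}_i$ and each $\mathscr{N}_i$ is weakly reversible. Take an arbitrary reaction $y \to y'$ of $\mathscr{N}$. Because the reaction sets cover $\mathscr{R}$, this reaction belongs to some $\mathscr{R}_i$. Weak reversibility of $\mathscr{N}_i$, via the cycle characterization, yields a directed path from $y'$ to $y$ consisting of reactions in $\mathscr{R}_i$. Since $\mathscr{N}_i$ is a subnetwork of $\mathscr{N}$, every reaction on this path also lies in $\mathscr{R}$, so this is a directed path from $y'$ to $y$ in $\mathscr{N}$, and $y \to y'$ lies on a directed cycle of $\mathscr{N}$. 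As the reaction was arbitrary, the cycle characterization gives that $\mathscr{N}$ is weakly reversible.

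I expect no serious obstacle here; the proposition is essentially bookkeeping once the right characterization is fixed, which is consistent with its being labelled well known. The only points deserving care are the precise meaning of a covering, namely that it is like a decomposition but without the requirement that the reaction sets be pairwise disjoint, so that $\bigcup_i \mathscr{R}_i = \mathscr{R}$, and the elementary but crucial observation that a directed path in a subnetwork is automatically a directed path in the ambient network. It is exactly this last fact that lets weak reversibility transfer upward from the covering pieces to the whole network.
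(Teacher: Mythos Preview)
Your proposal is correct and follows essentially the same approach as the paper: both rely on the cycle characterization of weak reversibility for the converse, arguing that a reaction lies on a directed cycle in some covering subnetwork and hence in the ambient network. The only difference is cosmetic: for the forward direction the paper exhibits the linkage class decomposition as the covering, whereas you use the trivial covering $\{\mathscr{N}\}$; both choices work, and indeed the paper remarks immediately after the proof that the trivial decomposition is acceptable in the single linkage class case.
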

	
	\begin{proof}
		Let $\mathscr N$ be a weakly reversible network. Consider the linkage class decomposition $\mathscr N=\mathscr N_1\cup \cdots \cup \mathscr N_{\ell}$. Clearly, each subnetwork in this decomposition is weakly reversible. This decomposition serves as a weakly reversible covering of $\mathscr N$. For the converse, if a network has a weakly reversible covering, any reaction is in one of the subnetworks and by definition, it is contained in a directed cycle there. It follows that the parent network is weakly reversible. 
	\end{proof}
	
	Proposition \ref{prop:WRcovering} accepts the trivial decomposition in the case of a single linkage class. Otherwise, one must restrict the validity to the case $l > 1$.
	
	\subsection{Incidence independence is a sufficient condition for a decomposition's weak reversibility}
	
	Incidence independence also enforces the weak reversibility property, but this is not a surprise given the close relationship of the incidence map with weak reversibility.
	The proposition extends the result that in a weakly reversible network, the linkage class decomposition is a weakly reversible decomposition. Hence, even in the single linkage class case, non-trivial weakly reversible decompositions can exist.
	
	\begin{proposition}\label{2.13}
		Let $\mathscr N$ be a weakly reversible network. If $\mathscr N=\mathscr N_1\cup \cdots \cup \mathscr N_k$ is an incidence independent decomposition,  then it is also weakly reversible.
	\end{proposition}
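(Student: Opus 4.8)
The plan is to translate weak reversibility into a statement about the kernel of the incidence map, and then exploit the directness of the image sum that incidence independence supplies. The tool I would rely on is the standard characterization (going back to Horn--Jackson and Feinberg, and tied to the complex-balancing condition ${I_a}\cdot K(x)=0$ already appearing in the excerpt) that a network is weakly reversible if and only if there is a strictly positive vector in the kernel of its incidence map; that is, $\mathscr{N}$ is weakly reversible precisely when there exists $g \in \mathbb{R}^{\mathscr{R}}_{>0}$ with $I_a g = 0$. The existence of such a positive circulation encodes that every reaction lies on a directed cycle, which is exactly weak reversibility.

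First I would fix, using that $\mathscr{N}$ is weakly reversible, a vector $g \in \mathbb{R}^{\mathscr{R}}_{>0}$ with $I_a g = 0$. Because the decomposition is induced by a partition $\{\mathscr{R}_1,\ldots,\mathscr{R}_k\}$ of $\mathscr{R}$, the reaction space splits as a coordinate direct sum $\mathbb{R}^{\mathscr{R}} = \bigoplus_{i=1}^{k} \mathbb{R}^{\mathscr{R}_i}$, and $g$ decomposes uniquely as $g = g_1 + \cdots + g_k$, where $g_i$ is the restriction of $g$ to the coordinates indexed by $\mathscr{R}_i$. Restricting $I_a$ to $\mathbb{R}^{\mathscr{R}_i}$ gives the subnetwork incidence map $I_{a,i}$, so that $I_{a,i} g_i \in \operatorname{Im} I_{a,i}$ for each $i$.

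The key step is to pass from $I_a g = 0$ to $I_{a,i} g_i = 0$ for every $i$. Here incidence independence enters: by definition it asserts that $I_a$ is the direct sum of the $I_{a,i}$, equivalently that the sum of images $\operatorname{Im} I_a = \operatorname{Im} I_{a,1} + \cdots + \operatorname{Im} I_{a,k}$ is direct. Now $0 = I_a g = \sum_{i=1}^{k} I_{a,i} g_i$ is a vanishing sum whose $i$-th term lies in $\operatorname{Im} I_{a,i}$; directness of the image sum forces each term to vanish, so $I_{a,i} g_i = 0$, i.e.\ $g_i \in \ker I_{a,i}$. Since $g$ is strictly positive, each $g_i$ is strictly positive on all of $\mathscr{R}_i$, so each subnetwork $\mathscr{N}_i$ carries a strictly positive vector in the kernel of its own incidence map. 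Applying the kernel characterization in the reverse direction to each $\mathscr{N}_i$ yields that every subnetwork is weakly reversible, which is the assertion.

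The only real obstacle is the bookkeeping at the point where the vanishing sum is split: one must use the uniqueness of the coordinate decomposition of $g$ and the internal directness of the image sum in tandem, rather than conflating the (automatic) directness of the domain decomposition with the (hypothesis-dependent) directness of the images. Everything else is routine, and no appeal to the dimension count $n-l = \sum_i (n_i - l_i)$ is needed, although that identity gives an equivalent route by showing $\ker I_a = \bigoplus_i \ker I_{a,i}$ and then reading off $g_i \in \ker I_{a,i}$ directly.
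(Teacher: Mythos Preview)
Your proof is correct and follows essentially the same route as the paper's: both take a strictly positive vector in $\ker I_a$ (using the standard characterization of weak reversibility), decompose it along the partition $\mathbb{R}^{\mathscr{R}}=\bigoplus_i \mathbb{R}^{\mathscr{R}_i}$, and invoke incidence independence (directness of the image sum) to force each component into $\ker I_{a,i}$, yielding weak reversibility of every subnetwork. Your write-up is in fact a bit more careful than the paper's about distinguishing the automatic domain directness from the hypothesis-dependent image directness, but the argument is the same.
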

	
	\begin{proof}
		Since $\mathscr N$ is weakly reversible, ${\rm Ker \ } I_{a}$ contains a positive vector $x$. We can write $x=x_1+x_2+\cdots+x_k$ with $x_i\in \mathbb R^{\mathscr R_i}$. Let $I_{a,i}$ be the restriction of $I_a$ to $\mathbb R^{\mathscr R_i}$. Since $I_a$ is a linear map, $I_a(x)=I_{a,1}(x)+I_{a,2}(x)+\cdots+I_{a,k}(x)$. Hence, $0+0+\cdots+0=0=I_{a,1}(x)+I_{a,2}(x)+\cdots+I_{a,k}(x)$. Because the decomposition is assumed to be incidence independent, it follows that $I_{a,i}(x)=0$ for each $i$, i.e., there is a positive vector in ${\rm Ker \ } I_{a,i}$, which is equivalent to the weak reversibility of $\mathscr N_i$. 
	\end{proof}
	
	\begin{figure}
		\begin{center}
			\includegraphics[width=10cm,height=3cm,keepaspectratio]{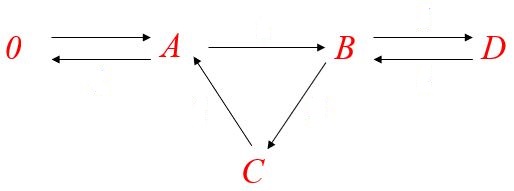}
			\caption{CRN in Example \ref{ex:networkWRexample2}
			} 
			\label{fig:networkWRexample}
		\end{center}
	\end{figure}
	
	\begin{figure}[ht]
		\begin{center}
			\includegraphics[width=10cm,height=3cm,keepaspectratio]{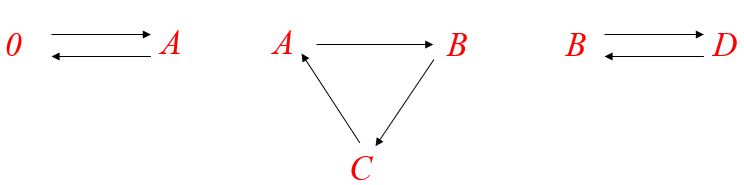}
			\caption{Weakly reversible decomposition of CRN in Figure \ref{fig:networkWRexample}}
			\label{fig:networkWRexampleDEC2}
		\end{center}
	\end{figure}
	
	\begin{example}
		We refer to the CRN in Figure \ref{fig:networkWRexample} and we decompose it into three subnetworks as provided in Figure \ref{fig:networkWRexampleDEC2}. Since $(n_1-l_1)+(n_2-l_2)+(n_3-l_3)=1+2+1=4$, and $n-l=4$, then the decomposition is incidence independent, and hence, weakly reversible.
		\label{ex:networkWRexample2}
	\end{example}
	
	\begin{figure}
		\begin{center}
			\includegraphics[width=17cm,height=4cm,keepaspectratio]{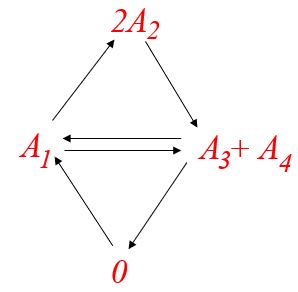}
			\caption{Reaction Network in Example \ref{ex:counterexample:inciinde:WR} adapted from \cite{marton}}
			\label{fig:counterexample:inde:WR}
		\end{center}
	\end{figure}
	
	\begin{example}
		Consider the network $\mathscr{N}$ given in Figure \ref{fig:counterexample:inde:WR} from \cite{marton}.
		Let $\mathscr{N}=\mathscr{N}_1\cup \mathscr{N}_2$, where $\mathscr{N}_1$ and $\mathscr{N}_2$ are the upper and the lower directed cycles, respectively. 
		Note that the $n_1-l_1=n_2-l_2=2$, but $n-l=3$. It follows that the induced decomposition is not incidence independent but it is weakly reversible.
		\label{ex:counterexample:inciinde:WR}	
	\end{example}
	
	\subsection{Independence and weak reversibility of decompositions}
	
	The following example shows that, in general, in a weakly reversible network, an independent decomposition need not be weakly reversible:
	
	\begin{example}
		Let $\mathscr{N} = \mathscr{N}_1 \cup \mathscr{N}_2$ where
		$\mathscr{N}_1: 0 \to X, X+Y \to Y$ and\\
		$\mathscr{N}_2: X \to X+Y, Y \to 0$.
	\end{example}
	
	The network is cyclic, and hence weakly reversible. Now, $s = 2$ and $s_1 = s_2 =1$, hence the decomposition is independent, but clearly not weakly reversible. The network is actually the total realization of an S-system with two dependent variables and the decomposition is its species decomposition, which is always independent \cite{AJLM2017,arceo2015,FAML2020}.
	
	Due to Proposition \ref{ZDD_prop} and Proposition \ref{2.13}, some weakly reversible networks can have independent decompositions that are also weakly reversible. Examples of which are zero deficiency networks with decompositions that are ZDD. 
	
	\begin{proposition} \label{sumdelta}
		Let $\mathscr{N}$ be a weakly reversible CRN where for every decomposition $\mathscr{N} = \mathscr{N}_1 \cup \ldots \cup \mathscr{N}_k$, $\delta = \delta _1 + \ldots + \delta _k$. Then any independent decomposition is weakly reversible.
	\end{proposition}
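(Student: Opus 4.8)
The plan is to leverage the two deficiency bounds recorded in the preliminaries together with Proposition~\ref{2.13}. Concretely, I would argue that the hypothesis $\delta = \delta_1 + \cdots + \delta_k$ upgrades any independent decomposition into a bi-independent one, after which Proposition~\ref{2.13} delivers weak reversibility for free. So the whole statement reduces to showing that, for a weakly reversible $\mathscr{N}$, independence plus additivity of the deficiency forces incidence independence.

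First I would fix an arbitrary independent decomposition $\mathscr{N} = \mathscr{N}_1 \cup \cdots \cup \mathscr{N}_k$. The hypothesis applies to every decomposition, in particular to this one, so $\delta = \delta_1 + \cdots + \delta_k$. Expanding each deficiency through $\delta = n - l - s$ and $\delta_i = n_i - l_i - s_i$ turns this identity into
\[
n - l - s = \sum_{i=1}^k (n_i - l_i) - \sum_{i=1}^k s_i.
\]
Because the decomposition is independent, the stoichiometric subspace is the direct sum of the subnetwork subspaces, hence $s = \sum_{i=1}^k s_i$ and the rank terms cancel, leaving
\[
n - l = \sum_{i=1}^k (n_i - l_i).
\]
This is precisely the criterion for incidence independence recorded just after the definition of the two independence notions, so the decomposition is incidence independent, and therefore bi-independent.

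To finish, I would invoke Proposition~\ref{2.13}: since $\mathscr{N}$ is weakly reversible and the decomposition has just been shown to be incidence independent, it is weakly reversible, which is the desired conclusion.

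I do not expect a genuine obstacle here; the only real content is the observation that, under independence, the additivity $\delta = \delta_1 + \cdots + \delta_k$ is equivalent to the incidence independence equation $n - l = \sum_{i}(n_i - l_i)$. This is exactly where the two deficiency inequalities ($\delta \le \delta_1 + \cdots + \delta_k$ for independent and $\delta \ge \delta_1 + \cdots + \delta_k$ for incidence independent decompositions) are being pinched into a single equality. Everything else is bookkeeping with the deficiency formula. It is worth noting that the hypothesis is phrased for every decomposition but is used only for the particular independent decomposition under consideration, so the argument would in fact go through under the weaker assumption that the deficiency is additive on independent decompositions alone.
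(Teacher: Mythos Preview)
Your proposal is correct and follows essentially the same route as the paper: the paper's proof simply asserts that under the hypothesis $\delta = \delta_1 + \cdots + \delta_k$, any independent decomposition is automatically incidence independent (and conversely), then implicitly defers to Proposition~\ref{2.13}. Your write-up just makes the underlying arithmetic with $\delta = n - l - s$ and $s = \sum_i s_i$ explicit, and your closing remark that the hypothesis is only needed for the particular independent decomposition at hand is a valid sharpening.
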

	
	\begin{proof}
		Since the decomposition of weakly reversible $\mathscr{N}$ into $k$ subnetworks satisfies $\delta = \delta _1 + \ldots + \delta _k$, then any independent decomposition is incidence independent, and vice versa.
		
	\end{proof}
	
	\begin{example}
		Note that monospecies CRNs (i.e., CRNs with species that serve as complexes) always have zero deficiencies. Hence, to get a weakly reversible decomposition of a zero deficiency monospecies CRN, we just have make sure that the subnetworks are also of zero deficiencies.
	\end{example}
	
	Proposition \ref{sumdelta} raises the following interesting question: Besides the weakly reversible monospecies networks, which other types of networks are in this class?
	
	Another class of weakly reversible networks is given by the reversible networks:	
	
	\begin{proposition} \label{2.14}
		Let $\mathscr N$ be a reversible network. If $\mathscr N=\mathscr N_1\cup \cdots \cup \mathscr N_k$ is an independent decomposition, then it is also reversible. 
	\end{proposition}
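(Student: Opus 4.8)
The plan is to show that each subnetwork $\mathscr{N}_i$ inherits reversibility directly, i.e., whenever a reaction $y' \to y$ lies in $\mathscr{N}_i$, its reverse $y \to y'$ lies in $\mathscr{N}_i$ as well. Since the decomposition is induced by a partition of $\mathscr{R}$ and $\mathscr{N}$ is reversible, the reverse reaction certainly exists somewhere in the network; the whole content of the claim is that it cannot migrate to a different block of the partition. Establishing this for every reaction in every subnetwork is exactly what it means for the decomposition to be reversible.

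First I would fix a reaction $r \colon y' \to y$ in some $\mathscr{N}_i$ and locate its reverse $\bar r \colon y \to y'$, which belongs to exactly one subnetwork $\mathscr{N}_j$ because $\{\mathscr{R}_1,\ldots,\mathscr{R}_k\}$ is a partition. The reaction vector of $r$ is $v = y - y' \in S_i$, while the reaction vector of $\bar r$ is $-v \in S_j$; since $S_j$ is a linear subspace it is closed under negation, so $v \in S_j$ as well, whence $v \in S_i \cap S_j$. The crux is then to invoke independence: the sum $S = S_1 + \cdots + S_k$ is direct, which by the criterion recalled earlier in the excerpt means $S_i \cap \sum_{l \neq i} S_l = \{0\}$. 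If $j \neq i$, then $S_j \subseteq \sum_{l \neq i} S_l$, so $v \in S_i \cap S_j \subseteq \{0\}$, forcing $v = 0$ and hence $y = y'$. This contradicts the requirement that no reaction is a self-loop (condition i of the network definition), so $v \neq 0$. Therefore $j = i$, and the reverse reaction stays in $\mathscr{N}_i$. As $r$ and $i$ were arbitrary, every subnetwork is reversible.

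I do not expect a genuine obstacle here; the one point demanding care is recognizing that the negative of the reaction vector lands in the same subnetwork's stoichiometric subspace, so that the directness of the sum can be applied to the single pair $S_i, S_j$ rather than to the full collection. The argument uses only independence — not incidence independence nor the global weak reversibility of $\mathscr{N}$ — so it is a clean analogue of Proposition \ref{2.13}, but carried out at the level of individual reversible reactions instead of linkage classes.
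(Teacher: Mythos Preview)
Your proof is correct and is essentially the same argument as the paper's: fix a reaction in $\mathscr{N}_i$, observe its reverse exists in some $\mathscr{N}_j$, note that both reaction vectors (being negatives of each other) yield a nonzero element of $S_i \cap S_j$, and conclude $i=j$ from the directness of the sum. The only cosmetic difference is that you explicitly cite the no-self-loop axiom to rule out $v=0$ and invoke the criterion $S_i \cap \sum_{l\neq i} S_l = \{0\}$, whereas the paper simply states that $S_i \cap S_j \neq \{0\}$ contradicts independence.
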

	
	\begin{proof}
		Let $\mathscr N=\mathscr N_1\cup \cdots \cup \mathscr N_k$ be an independent decomposition. We want to show that this decomposition is reversible. Consider the subnetwork $\mathscr N_i$. Let $R:y'\rightarrow y''$ be a reaction in $\mathscr N_i$. Then, the corresponding nonzero reaction vector $y''-y'$ is contained in the stoichiometric subspace $S_i$ of $\mathscr N_i$. Since it is assumed that the parent network $\mathscr N$ is reversible, the reaction $R^*: y''\rightarrow y'$ exists in  $\mathscr N$. If $R^*$ is a reaction in another subnetwork say $\mathscr N_j$, then the corresponding vector $y'-y''$ is contained in the stoichiometric subspace $S_j$ of $\mathscr N_j$. It follows that $y''-y'\in S_j$ suggesting that $S_i\cap S_j\neq \{0\}$. This is a contradiction since the decomposition is assumed to be independent. Hence, $R^*$ occurs in $\mathscr N_i$ implying that $\mathscr N_i$ is reversible. It follows that the decomposition is also reversible. 
	\end{proof}
	
	We emphasize that the contrapositive of Proposition \ref{2.14} is useful in determining the non-independence of a decomposition of a reversible network. Compared to the process of knowing that a decomposition is independent, in most cases, it is easy to check if the subnetworks are reversible or not.

	\section{Summary and Outlook}
	\label{section:summary}
	In this section, we provide a summary and outlook of our work.
	\begin{itemize}
		\item[1.] We revisited the method of Hernandez and De la Cruz \cite{hernandez:delacruz1} of finding an independent decomposition of a CRN. We found out that the output decomposition is the finest one. We also provided some results for networks with embedded deficiency zero subnetwork.
		\item[2.] We established an algorithm of finding the finest incidence independent decomposition of a CRN.
		\item[3.] We determined all the forms of independent and incidence independent decompositions.
		\item[4.] We provided the number of independent and incidence independent decompositions of a CRN.
		\item[5.] We found out that for weakly reversible networks, incidence independent is a sufficient condition for weak reversibility of a decomposition, and we identified subclasses of weakly reversible networks where any independent decomposition is weakly reversible.
		\item[6.] One may apply the theory provided in this work in different examples of networks in literature.
		\item[7.] More conditions or connections among independent, incidence independent, and weakly reversible decompositions can be explored.
	\end{itemize}
	
	\section*{Acknowledgement}
		This work was funded by the UP System Enhanced Creative Work and Research Grant (ECWRG-2020-2-11R).

\end{document}